\providecommand{\U}[1]{\protect\rule{.1in}{.1in}}
\newtheorem{theorem}{Theorem}
\newtheorem*{theorem*}{Theorem}
\newtheorem{conjecture}[theorem]{Conjecture}
\newtheorem*{conjecture*}{Conjecture}
\newtheorem*{belief*}{General Belief}
\newtheorem{corollary}[theorem]{Corollary}
\newtheorem{definition}[theorem]{Definition}
\newtheorem{lemma}[theorem]{Lemma}
\newtheorem{notation}[theorem]{Notation}
\newtheorem{proposition}[theorem]{Proposition}
\newtheorem{fact}[theorem]{Fact}
\renewenvironment{proof}[1][Proof]{\noindent\textbf{#1.} }{\ \rule{0.5em}{0.5em}}
\newenvironment{acknowledgements}[1][Acknowledgments]{\noindent\textbf{#1.} }{\ }
\newcommand{\N}{\mathbb{N}}
\newcommand{\Orb}{\mathcal{O}}
\renewcommand{\d}{\mathrm{d}}
\newcommand{\supp}{\mathrm{supp}}
\newcommand{\Lip}{\mathrm{Lip}}
\newcommand{\op}[1]{\left\langle#1\right\rangle}
\newcommand{\var}{\mathrm{var}}
\newcommand{\V}{\mathrm{V}}
\begin{document}

\title[Super-continuous Ergodic Optimization]
{Ergodic Optimization of Super-continuous Functions in the Shift}
\author{Anthony Quas}
\email{aquas(a)uvic.ca}
\author{Jason Siefken}
\email{siefkenj(a)uvic.ca}
\address{Department of Mathematics and Statistics, University of
Victoria, Victoria BC, Canada V8W 3R4}
\thanks{AQ was supported by NSERC; JS was supported by NSERC and the
University of Victoria}

\date{\today}

\begin{abstract}
  Ergodic Optimization is the process of finding invariant probability
  measures that maximize the integral of a given function.  It has
  been conjectured that ``most'' functions are optimized by measures
  supported on a periodic orbit, and it has been proved in several
  separable spaces that an open and dense subset of functions is
  optimized by measures supported on a periodic orbit. All known
  positive results have been for separable spaces. We
  give in this paper the first positive result for a non-separable space, the
  space of \emph{super-continuous} functions on the full shift, where
  the set of functions optimized by periodic orbit measures contains
  an open dense subset.

\end{abstract}
\subjclass[2000]{Primary: 37D20.  Secondary: 37A05, 37D35, 37E10.}

\maketitle

\section{Introduction}
Given an expansive map $T:\Omega\to\Omega$ and a continuous function
$f$, we say that a $T$-invariant probability measure $\mu$
\emph{optimizes} $f$ if
  \[
    \int f\,\d\mu \geq \int f\,\d\nu
  \]
  for all $T$-invariant probability measures $\nu$.  If $y$ is a
  periodic point (i.e., $T^iy=y$ for some $i$), let $\mu_y$ be the
  unique $T$-invariant probability measure supported on $\Orb y$, the
  orbit of $y$.  We call $\mu_y$ a \emph{periodic orbit measure}.  If
  $\mu_y$ optimizes $f$, we will also say that $f$ is optimized by the
  periodic point $y$.

  \begin{belief*}
    ``Most'' functions are optimized by measures supported on a
    periodic orbit.
  \end{belief*}
  ``Most'' can take various meanings, but for our purposes, we
  consider ``most'' to be an open dense set or a residual set.

  \begin{conjecture}
    \label{conj}
    In an expansive dynamical system, the set of Lipschitz functions
    optimized by periodic orbit measures contains an open set that is
    dense in the class of Lipschitz functions.
  \end{conjecture}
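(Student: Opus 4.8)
The plan is to run the two-step scheme that has worked in the known separable hyperbolic cases (as in Yuan--Hunt and in Contreras): a \emph{density} step producing, near an arbitrary Lipschitz $f$, a function optimized by a single periodic orbit, and an \emph{openness} step showing such functions fill an open set. Both are driven by a Ma\~n\'e--Conze--Guivarc'h lemma, so the argument splits into three tasks: (I) every Lipschitz $f$ admits a Lipschitz (calibrated) subaction; (II) density; (III) openness. One remark is forced at the outset: some abundance of periodic orbits must be assumed, since an expansive system can be minimal with no periodic point at all (a Sturmian subshift, for instance), and then \emph{no} function is optimized by a periodic orbit. I would therefore carry the argument out under the shadowing property --- i.e.\ essentially for subshifts of finite type and Axiom~A systems --- which is exactly what makes the three tasks accessible.

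For (I), fix Lipschitz $f$ and put $\alpha(f)=\max_\nu\int f\,\mathrm{d}\nu$. Build $\phi$ as a Lax--Oleinik-type supremum of Birkhoff sums of $f-\alpha(f)$ taken over ever longer backward orbit segments; shadowing together with the Lipschitz bound on $f$ makes competing sums differ by a geometrically summable amount, giving both convergence and a uniform modulus of continuity, so $\phi$ is Lipschitz. Normalising, set $\bar f=f+\phi\circ T-\phi-\alpha(f)\le 0$: this differs from $f-\alpha(f)$ by a coboundary, hence has the same optimizing measures, and its zero set $\mathcal M=\{\bar f=0\}$ is compact, $T$-invariant, and contains the support of every measure optimizing $f$.

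For (II), given $f$ and $\epsilon>0$, take $x$ in the support of some measure optimizing $f$ (so $\bar f$ vanishes along $\Orb x\subseteq\mathcal M$) and, by shadowing, a periodic orbit $\Orb p$ tracking $\Orb x$ to within $\beta$; then $\mu_p$ is weak-$*$ close to an optimizing measure and $|\bar f|\le\mathrm{Lip}(\bar f)\,\beta$ on $\Orb p$. The real work is now to promote this to a genuine perturbation: using the local product structure of stable and unstable sets at the hyperbolic orbit $\Orb p$, one constructs a Lipschitz $g$ with $\|g-f\|_{\mathrm{Lip}}<\epsilon$ and a re-tuned Lipschitz subaction for $g$ whose zero set is exactly $\Orb p$, so that $\mu_p$ is the unique measure optimizing $g$. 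The naive move of replacing $f$ by $f-\epsilon\,d(\cdot,\Orb p)$ does not suffice: because $\bar f$ need not vanish identically on $\Orb p$, it only controls the competition away from $\Orb p$ and only in the uniform norm, so the local-structure construction cannot be avoided if one wants density in the Lipschitz norm.

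For (III), call $g$ optimized by $\mu_p$ \emph{with locking} if there is a Lipschitz subaction $\psi$ with $g+\psi\circ T-\psi<\alpha(g)$ strictly off $\Orb p$, with a quantitative transversal (hyperbolic) gap near $\Orb p$. For Lipschitz $h$ with $\|h-g\|_{\mathrm{Lip}}$ small, re-tune $\psi$ through the local product structure at $\Orb p$ to obtain a subaction $\psi_h$ with the analogous strict inequality; then $\Orb p$ is still the full zero set and $\mu_p$ still optimizes $h$. Hence the locked functions form an open set, and arranging in (II) that the perturbation $g$ is itself locked shows this open set is dense, which completes the proof. The step I expect to be the genuine obstacle --- and the reason the conjecture remains open in full generality --- is precisely this re-tuning of subactions under perturbation: it rests essentially on a local hyperbolic/product structure at the periodic orbit (the same structure that underlies the Lipschitz subaction in (I)), which an arbitrary expansive system need not possess, and without which the statement can already fail. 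The honest target is thus: in an expansive system with the shadowing property, the set of Lipschitz functions optimized by a periodic orbit measure contains an open dense set.
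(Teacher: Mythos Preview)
The statement you are attempting to prove is Conjecture~\ref{conj}, which the paper explicitly leaves \emph{open}: the paper proves only the super-continuous analogue (Theorem~\ref{mainresult}) and remarks that ``proof of the Lipschitz case remains elusive.'' There is thus no proof in the paper to compare against; your proposal must be judged on its own.

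Your outline correctly identifies the Ma\~n\'e--Conze--Guivarc'h/Yuan--Hunt strategy, and step~(I) (Lipschitz subaction) is known, while step~(III) (openness after a further perturbation) is essentially Yuan--Hunt's second statement. The genuine gap is step~(II): you assert that ``using the local product structure \ldots\ one constructs a Lipschitz $g$'' uniquely optimized by $\mu_p$, but you do not give the construction, and no one has. The naive perturbation $f\mapsto f-\varepsilon\, d(\cdot,\Orb p)$ fails for precisely the reason the paper's proof of Theorem~\ref{mainresult} succeeds only in the super-continuous case: the decisive inequality there is $\tfrac{\varepsilon}{2}A_k-3LA_{k+1}>0$, which requires $A_{k+1}/A_k\to 0$; in the Lipschitz case $A_n=2^{-n}$, so $A_{k+1}/A_k\equiv 1/2$ and the perturbation $-\varepsilon\,d(\cdot,\Orb p)$ has no extra ``sharpness'' over the error terms it must dominate. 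Your ``local product structure'' replacement is a placeholder for the missing idea, not the idea itself --- and your retreat to systems with shadowing does not help, since the full shift certainly has shadowing and the conjecture is open even there. In short, your proposal is a correct diagnosis of what a proof would need, but the load-bearing construction in~(II) is asserted rather than supplied, and supplying it is exactly the open problem.
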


  Analogs to Conjecture \ref{conj} have been shown false in the
  general case of continuous functions \cite{jenkinson1}, however they
  have been shown true in a handful of separable spaces. Further,
  various numerical experiments on many important dynamical systems
  support this conjecture (and hint towards some very interesting
  relationships between parameterized families of functions and the
  period of optimizing orbits) \cite{hunt1,hunt2,hunt3}.

  We present a non-separable space where the analog of Conjecture
  \ref{conj} holds true.  Let $\Omega=\mathcal A^\N$ be the one-sided
  shift space on a finite alphabet.  For a sequence $A_n\searrow 0$,
  define a metric $d_A(x,y)=A_n$ if $x$ and $y$ first differ in the $n$th
  place (i.e. $(x)_i=(y)_i$ for $0\le i<n$; $(x)_n\ne (y)_n$).
  Let $C_A(\Omega)$ denote the set
  of Lipschitz functions with respect to the $d_A$ metric, equipped
  with the $d_A$-Lipschitz norm.  If $\{A_n\}$ satisfies the
  additional property that $A_{n+1}/A_{n}\to 0$, we call $f\in
  C_A(\Omega)$ \emph{super continuous}.
  \begin{theorem}
    \label{mainresult}
    Suppose $A=\{A_n\}$ and $A_{n+1}/A_{n}\to 0$.  For a periodic
    orbit measure $\mu_y$ supported on $\Orb y$, let $P_{y}=\{f\in
    C_A(\Omega):\mu_y\text{ is the unique maximizing measure}\}$.
    Then, $\bigcup_{y\text{ periodic}} (P_y)^\circ$ is dense in all of
    $C_A(\Omega)$ under the $A$-norm topology (where $(P_y)^\circ$ is
    the interior of $P_y$).
  \end{theorem}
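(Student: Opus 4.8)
The plan is to reduce the theorem to a single density statement — that the set of potentials admitting \emph{a} periodic maximizing measure is dense in $C_A(\Omega)$ — and then prove that statement by a finite‑dimensional reduction made possible by the hypothesis $A_{n+1}/A_n\to 0$.

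\textbf{A quantitative openness criterion.} Write $\beta(f)=\max_\nu\int f\,\d\nu$ and, for an invariant $\nu$, $E_f(\nu)=\beta(f)-\int f\,\d\nu\ge 0$; let $D(\nu)=\sup\{\int h\,\d(\mu_y-\nu):h\in C_A(\Omega),\ \|h\|_A\le 1\}$ be the $C_A$‑dual distance from $\nu$ to $\mu_y$, which is $0$ only when $\nu=\mu_y$ since $C_A$ contains all locally constant functions. A direct estimate shows that if $E_g(\nu)\ge c\,D(\nu)$ for every invariant $\nu$ and some $c>0$, then $B_A(g,c)\subseteq P_y$, hence $g\in(P_y)^\circ$: the hypothesis forces $\mu_y$ to be the unique maximizer of $g$ (any maximizer $\nu$ has $0=E_g(\nu)\ge c\,D(\nu)$), and for $\|g'-g\|_A<c$ one computes $E_{g'}(\nu)-E_{g'}(\mu_y)=[E_g(\nu)-E_g(\mu_y)]+\int(g'-g)\,\d(\mu_y-\nu)\ge (c-\|g'-g\|_A)\,D(\nu)>0$ for $\nu\neq\mu_y$, and since $\min_\nu E_{g'}=0$ this pins the minimizer of $E_{g'}$ to $\mu_y$. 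So it suffices, given $f$ and $\varepsilon$, to build $g$ with $\|g-f\|_A<\varepsilon$ satisfying this inequality for some periodic $y$.

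\textbf{Rigidity of periodic measures, and reduction to a density statement.} Let $\rho(x,\Orb y)=\min_{z\in\Orb y}d_A(x,z)$. Because the shift is expansive with a uniform expansive constant, one has $D(\nu)\le W_{d_A}(\nu,\mu_y)\le C(\Orb y)\int\rho(\cdot,\Orb y)\,\d\nu$ for every invariant $\nu$: smallness of $\rho$ forces deep shadowing of $\Orb y$, and a return‑time argument using invariance then forces the measure towards $\mu_y$ at a linear rate, with all constants uniform in $\nu$. Consequently it is enough to arrange $E_g(\nu)\ge c''\int\rho(\cdot,\Orb y)\,\d\nu$ for all $\nu$ (then $E_g(\nu)\ge (c''/C(\Orb y))\,D(\nu)$ and the previous paragraph applies); and since $\rho\ge 0$, this holds \emph{exactly} when $\mu_y$ is a maximizing measure for $g+c''\rho(\cdot,\Orb y)$. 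Absorbing the small term $c''\rho(\cdot,\Orb y)$ into the perturbation, the theorem reduces to: for every $f\in C_A(\Omega)$ and $\varepsilon>0$ there are $g$ with $\|g-f\|_A<\varepsilon$ and a periodic $y$ with $\mu_y$ maximizing for $g$.

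\textbf{Finite‑level reduction — where super‑continuity enters.} Put $\eta_N=\sup_{m\ge 0}A_{N+m}/A_m$; the hypothesis $A_{n+1}/A_n\to 0$ gives $\eta_N\to 0$. For $f_N(x)=f(x_0\cdots x_{N-1}000\cdots)$ one checks $\Lip_A(f_N)\le\Lip_A(f)$ and $|E_f(\nu)-E_{f_N}(\nu)|\le 2\eta_N\Lip_A(f)A_0$ uniformly in $\nu$, so for large $N$ the maximization of $\int f\,\d\nu$ is determined, up to an arbitrarily small error, by the depth‑$N$ data. Since $f_N$ depends on finitely many coordinates, maximizing $\int f_N\,\d\nu$ is a linear program over the polytope of depth‑$N$ cylinder distributions of invariant measures, whose vertices are periodic‑orbit measures; so some $\mu_y$ maximizes it, and after an arbitrarily small perturbation of $f_N$ within depth‑$N$ functions — which, as $A_0,\dots,A_{N-1}$ are now fixed, is an arbitrarily $A$‑small perturbation — we may take $y$ primitive with period $p<N$, $\mu_y$ the unique LP‑maximizer with a positive gap $\gamma_N$, and (automatic once $p<N$) $\mu_y$ the only invariant measure with that depth‑$N$ distribution.

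\textbf{The construction, and the main obstacle.} Take $g=f+\psi$, where $\psi$ combines the depth‑$N$ tilt just described with a small multiple of $-\rho(\cdot,\Orb y)$, scaled so that $\|\psi\|_A<\varepsilon$. To verify that $\mu_y$ maximizes $g$ one writes, for each invariant $\nu$, $\int(f+\psi)\,\d(\nu-\mu_y)$ as a depth‑$N$ part — bounded above by $-\gamma_N\|\nu|_N-\mu_y|_N\|_1$ via the strict LP — plus a tail part $\int(f-f_N)\,\d(\nu-\mu_y)$, controlled by the rigidity estimate together with the bound $\rho(\cdot,\Orb y)\le A_N$ on the depth‑$N$ orbit cylinders; one then needs $\gamma_N$ and the orbit‑penalty to absorb the tail. \textbf{The hard part is exactly this bookkeeping:} $f-f_N$ is small in sup‑norm but \emph{not} in $A$‑norm, and for invariant measures that shadow $\Orb y$ beyond depth $N$ the level‑$N$ gap is powerless, so one must show the genuinely $C_A$‑small perturbation still suffices. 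This is where $A_{n+1}/A_n\to 0$ is indispensable — it forces $\eta_N$ to decay fast enough relative to the fine‑scale resolution $A_N$ that the tail contribution near $\Orb y$ is dominated — and it is the step where the argument truly uses super‑continuity rather than mere Lipschitz regularity (for a fixed ratio bounded away from $0$, $\eta_N$ is comparable to $A_{N-1}$ and the scheme collapses).
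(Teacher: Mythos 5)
Your proposal takes a genuinely different route from the paper — the paper proves the theorem by putting $f$ into Ma\~n\'e--Conze--Guivarc'h normal form via Bousch's fixed-point operator $\Phi_f$, choosing $y$ by locating a minimal depth-$k$ recurrence in an optimal orbit, perturbing by $-\varepsilon d_A(\cdot,\Orb y)$, and then showing through a backward-orbit excursion count ($q^{(t_n-t_{n-1})}(\omega_{t_n})\le -\alpha<0$) that the number of excursions away from $\Orb y$ must be finite — whereas you propose an openness criterion in the $C_A$-dual norm, a Wasserstein rigidity bound for $\mu_y$, and a finite-level LP reduction. The architecture is attractive, and several pieces check out (the openness criterion, the equivalence between $E_g(\nu)\ge c''\int\rho\,\d\nu$ and $\mu_y$ maximizing $g+c''\rho$, the bound $\eta_N\to 0$, $\Lip_A(f_N)\le\Lip_A(f)$, and the Wasserstein bound $W_{d_A}(\nu,\mu_y)\le(1+A_0/A_{k-1})\int\rho\,\d\nu$ via an equi-mass-in-balls argument). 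However, as a proof it has a genuine gap, and it is precisely the one you flag.

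The final absorption step is not bookkeeping: it is the theorem. The level-$N$ gap $\gamma_N$ is not a free parameter; it is produced by a perturbation whose $A$-norm must be $<\varepsilon$, which in the depth-$N$ finite-dimensional space forces its sup-norm to be $\lesssim\varepsilon A_{N-1}$, and hence $\gamma_N$ is at most that scale (and, depending on the LP geometry, can be far smaller). Likewise the orbit-penalty coefficient must satisfy $c''\lesssim\varepsilon/(1+A_0)$, so it cannot dominate terms of size $\Lip_A(f)\int\rho\,\d\nu$ that arise when one compares $f-f_N$ on deep-shadowing orbits with its values on $\Orb y$; that comparison produces errors of order $\Lip_A(f)\rho(\cdot)$, not $\varepsilon\rho(\cdot)$, and $\Lip_A(f)$ does not shrink with $\varepsilon$. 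You correctly identify that $A_{N+1}/A_N\to 0$ gives $A_N\ll A_{N-1}$ and hence makes $\|f-f_N\|_\infty$ small relative to the available perturbation budget, but the quantitative chain ``$\gamma_N$ plus $c''\rho$ dominates $\int(f-f_N)\,\d(\mu_y-\nu)$ uniformly in $\nu$'' is never established and is exactly where the work is. Additionally, the claim that an arbitrarily small depth-$N$ perturbation lets you ``take $y$ primitive with period $p<N$'' is not justified — a generic perturbation of the LP picks out a unique optimal vertex, i.e.\ a simple cycle in the de Bruijn graph on $(N-1)$-blocks, but the length of that cycle can vastly exceed $N$. (This particular assertion is also unnecessary: the $N$-marginal of any simple cycle already determines the invariant measure uniquely regardless of whether $p<N$; the paper sidesteps the issue entirely by choosing $y$ from a minimal recurrence of an optimal orbit, which controls both its depth of approximation and its period.) As written, then, the argument is incomplete at its central step.
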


  We will briefly survey the most well-known positive results.  A
  function $f$ is a Walters function (introduced by Walters in
  \cite{walters2}) if for every $\varepsilon>0$ there exists a
  $\delta>0$ so that for all $n\in \N$ and $x$ and $y$,
  \[
  \max_{0\leq i<n} \{d(T^ix,T^iy)\} \leq \delta \implies
  |S_nf(x)-S_nf(y)|<\varepsilon,
  \]
  where $S_n f(w) = \sum_{i=0}^{n-1} f(T^iw)$.  Bousch shows for
  Walters functions, the analog of Conjecture \ref{conj} holds
  \cite{bousch2}.

  Contreras, Lopes, and Thieullen showed in \cite{lopez} that when
  using a H\"older norm external to a particular union of H\"older
  spaces, the analog of Conjecture \ref{conj} for H\"older spaces
  holds.  Yuan and Hunt made significant progress towards proving
  Conjecture \ref{conj}, though the full result has not yet been
  proved.

  Presented are the already-established theorems for comparison. Note
  that although the theorems are stated in a variety of contexts
  (expanding maps of the circle, one-sided shifts etc.), the essence
  of the problem is present in the simple setting of the one-sided
  shift.

  \begin{theorem*}[Bousch \cite{bousch2}]
    Let $T:X\to X$ be the one-sided shift map and let $W$ denote the
    set of Walters functions on $X$.  If $P\subset W$ is the set of
    Walters functions optimized by measures supported on periodic
    points, then $P$ contains an open set dense in $W$ with respect to
    the Walters norm.
  \end{theorem*}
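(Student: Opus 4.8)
\medskip

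The plan is to prove the equivalent density statement: for every Walters function $f$ and every $\varepsilon>0$ I will produce a Walters function $g$ with $\|f-g\|_{W}<\varepsilon$ and a radius $\varepsilon_{1}>0$ so that every Walters function within $\varepsilon_{1}$ of $g$ has one fixed periodic orbit measure $\mu_{y}$ as its unique maximizing measure; the union over all $(f,\varepsilon)$ of these $\varepsilon_{1}$-balls is then an open subset of $P$ dense in $W$. Two standard facts are used. First, Bousch's form of Ma\~{n}\'{e}'s lemma for Walters functions: $f$ admits a continuous subaction $\varphi$, so that with $\beta(f)=\sup_{\nu}\int f\,d\nu$ the function $f-\beta(f)+\varphi-\varphi\circ T$ is $\le 0$ with supremum $0$, is again Walters (its Walters modulus controlled by that of $f$ and the uniform continuity of $\varphi$), has the same maximizing measures as $f$, and those are exactly the invariant measures carried by its zero set. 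Second, periodic orbit measures are weak-$\ast$ dense among the invariant measures of the full shift. Since passing from $f$ to $f-\beta(f)+\varphi-\varphi\circ T$ only adds a constant and a continuous coboundary, a small perturbation of the latter corresponds to a small perturbation of the former with the same maximizing measures; so I may and do assume $f\le 0$, $\sup f=0$.

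The crucial lemma is that for every $\varepsilon'>0$ there is a periodic point $y$, of minimal period $p$, with $-S_{p}f(y)<\varepsilon'$: a periodic orbit that \emph{almost closes up}, with total loss below $\varepsilon'$. To see it, take an invariant measure carried by $\{f=0\}$ and a point $x$ in its support, so $f(T^{i}x)=0$ for all $i\ge 0$; let $k(\varepsilon')$ be the number of extra coordinates supplied by the Walters condition at level $\varepsilon'$ (any $u,v$ agreeing on the first $m+k(\varepsilon')$ coordinates satisfy $|S_{m}f(u)-S_{m}f(v)|<\varepsilon'$, for \emph{all} $m$ simultaneously); by the pigeonhole principle the forward orbit of $x$ revisits some length-$k(\varepsilon')$ cylinder, and periodically repeating the block between two such visits yields a periodic point $y$ agreeing with a forward iterate of $x$ on enough coordinates that the Walters condition gives $-S_{q}f(y)<\varepsilon'$ for $q$ the block length; passing to the minimal period keeps the bound. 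This is precisely where being Walters, rather than merely continuous --- in which generality the analogue is false --- is essential: the Birkhoff sum is controlled uniformly in the period $q$, which is in general forced to grow as $\varepsilon'\to 0$. Now fix such a $y$ with $\varepsilon'=\varepsilon/8$; let $p$ be its period, set $\delta_{1}=\varepsilon/2$ and $a_{p}=-\tfrac1p S_{p}f(y)\in[0,\varepsilon/(8p))$. Pick a dependence depth $N$ larger than $p$ and than all the Walters coordinate-counts invoked below; for $i=0,\dots,p-1$ let $B_{i}$ be the length-$N$ word read along $\Orb y$ from $T^{i}y$, so that (as $N>p$) the cylinders $[B_{0}],\dots,[B_{p-1}]$ are pairwise disjoint, and set $C=\bigcup_{i}[B_{i}]$. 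Let $h$ be the locally constant function of dependence depth $N$ equal to $a_{p}$ on $C$ and to $-\delta_{1}$ off $C$, and put $g=f+h$. Then $\|h\|_{\infty}=\delta_{1}<\varepsilon$, and since $h$ is locally constant with large dependence depth, $\|h\|_{W}<\varepsilon$.

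It remains to check that $\mu_{y}$ is the unique maximizing measure of $g$ and that this survives small Walters perturbations. One computes $\int g\,d\mu_{y}=\tfrac1p(S_{p}f(y)+pa_{p})=0$, and since $\sup g$ need not be $0$ I must show $\int g\,d\mu<0$ for every ergodic $\mu\ne\mu_{y}$. As $N$ exceeds $p$, the set $C$ is a ``tube'': whenever $T^{a}x,\dots,T^{a+\ell-1}x$ all lie in $C$, matching overlapping length-$N$ blocks forces $T^{a}x$ to agree with some $T^{j_{0}}y$ on the first $\ell+N-1$ coordinates; consequently $\bigcap_{j\ge 0}T^{-j}C=\Orb y$, so if $\mu(C)=1$ then $\mu=\mu_{y}$. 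If instead $\rho=\mu(X\setminus C)>0$, take a $\mu$-generic point $x$ and, for each $n$, split $\{0,\dots,n-1\}$ into maximal $C$-sojourns and excursions. Each excursion step contributes $f-\delta_{1}\le-\delta_{1}$; each sojourn of length $\ell$ contributes $S_{\ell}f(T^{a}x)+\ell a_{p}$, which by the Walters condition is $<S_{\ell}f(T^{j_{0}}y)+\varepsilon'+\ell a_{p}$, and this by $f\le 0$ and $S_{p}f(y)=-pa_{p}$ is $\le (-\ell a_{p}+pa_{p})+\varepsilon'+\ell a_{p}=pa_{p}+\varepsilon'$, a bound \emph{independent of} $\ell$. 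Since the number of sojourns before time $n$ exceeds the number of excursion steps by at most one, dividing by $n$ and letting $n\to\infty$ yields $\int g\,d\mu\le\rho\,(pa_{p}+\varepsilon'-\delta_{1})<0$, as $pa_{p}+\varepsilon'<\varepsilon/4<\delta_{1}$; ergodic decomposition finishes uniqueness. For stability one repeats this with $g+e$, $\|e\|_{W}<\varepsilon_{1}$, where $\varepsilon_{1}>0$ is chosen small (depending on $p$) and $N$ was taken above the Walters coordinate-count that the norm bound $\|e\|_{W}<\varepsilon_{1}$ forces uniformly at level $\varepsilon_{1}$: during a sojourn the orbit shadows $\Orb y$ in the Walters sense for $e$ too, so $S_{\ell}e(T^{a}x)=S_{\ell}e(T^{j_{0}}y)+O(\varepsilon_{1})=\ell\!\int e\,d\mu_{y}+O(p\varepsilon_{1})$, and carrying these terms through gives $\int(g+e)\,d\mu<\int(g+e)\,d\mu_{y}$ for every ergodic $\mu\ne\mu_{y}$ provided $pa_{p}+\varepsilon'+O(p)\,\varepsilon_{1}<\delta_{1}$, which holds once $\varepsilon_{1}$ is small enough. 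Thus the entire $\varepsilon_{1}$-ball about $g$ lies in $P$ --- in fact consists of functions uniquely maximized by $\mu_{y}$.

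I expect the difficulty to concentrate in two places. First, the almost-closing-orbit lemma: it is exactly here that the uniform-in-$n$ control of Birkhoff sums afforded by the Walters condition is indispensable, and it is why the statement can hold in this setting while its analogue fails for general continuous functions. Second, the interplay with the Walters norm: the tube $C$ must have dependence depth larger than the period $p$, and $p$ is forced to grow as $\varepsilon'\to 0$, so one must confirm that such a deep, small-amplitude, locally constant $h$ is genuinely small in the Walters norm, and one must extract a \emph{uniform} Walters coordinate-count over an entire ball $\{\|e\|_{W}<\varepsilon_{1}\}$; both rest on the precise definition of that norm. A minor point is checking that the normalized potential $f-\beta(f)+\varphi-\varphi\circ T$, and the final $g$, are still Walters functions of controlled size, which follows from uniform continuity of the subaction.
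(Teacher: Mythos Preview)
The paper does not contain a proof of this theorem; it is quoted in the introduction as a result of Bousch, with no argument supplied. There is thus no paper-proof to compare your attempt against.

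Your strategy does, however, closely parallel the paper's proof of its own main result (for super-continuous rather than Walters functions): normalize via a subaction to get $f\le 0$; use a recurrence/pigeonhole argument on a point in the maximizing set to produce a nearby periodic orbit $y$ with small Birkhoff loss; add a small penalty function concentrated away from $\Orb y$; and establish stability on a norm-ball. The execution differs: the paper perturbs by the metric function $-d_A(\cdot,\Orb y)$ and proves uniqueness and stability by tracking preimage chains through the fixed point $q^*$ of the transfer-type operator $\Phi_q$, while you perturb by a locally constant bump and argue directly via a sojourn/excursion decomposition of Birkhoff sums along $\mu$-generic points. Your route is more elementary but needs tighter bookkeeping.

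The one place I would flag as a genuine gap is the claim $\|h\|_W<\varepsilon$. For a depth-$N$ locally constant $h$, if $x$ and $x'$ agree on their first $n+k$ coordinates with $k<N$, the last $N-k$ terms of $S_n h(x)-S_n h(x')$ need not cancel, so the Walters oscillation at scale $2^{-k}$ is of order $(N-k)\|h\|_\infty$, not $\|h\|_\infty$. Since your construction forces $N>p$ and $p$ may grow as $\varepsilon'\to 0$, large dependence depth works \emph{against} smallness in the Walters norm under the usual definitions, not for it. You acknowledge this hinges on the precise norm, but as written the estimate is unjustified; one repair is to replace the locally constant bump by a perturbation whose Walters modulus is controlled independently of $N$ (e.g.\ a function built from a Walters-compatible distance to $\Orb y$), much as the paper uses $-d_A(\cdot,\Orb y)$ in the super-continuous case. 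The order-of-quantifiers tension you note between $N$ and $\varepsilon_1$ is related and should be resolved at the same time.
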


  \begin{theorem*}[Contreras-Lopes-Thieullen \cite{lopez}]
    Let $T$ be a $C^{1+\alpha}$ expanding map of the circle.  Let
    $H_\beta$ be the set of $\beta$-H\"older functions on $S^1$ and
    let $\mathcal F_{\alpha+}=\bigcup_{\beta>\alpha} H_{\beta}$.  Let
    $P_{\alpha+}\subset \mathcal F_{\alpha+}$ be the subset of
    functions uniquely optimized by measures supported on a periodic
    point.  Then $P_{\alpha+}$ contains a set that is open and dense
    in $\mathcal F_{\alpha+}$ under the $H_{\alpha}$ topology (i.e.,
    the $\alpha$-H\"older norm).
  \end{theorem*}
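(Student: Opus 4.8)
The plan is to prove Theorem~\ref{mainresult} directly, exploiting the extreme rigidity that $A_{n+1}/A_n\to 0$ imposes: under the $A$-norm, a function is essentially controlled by its "low-order" Fourier-like data (its dependence on the first few coordinates) up to a vanishingly small correction coming from higher coordinates. The key structural observation is that for super-continuous $f$, the ergodic-optimization problem decouples scale by scale. I would begin by fixing notation: for $x\in\Omega$ write $x|_n$ for the length-$n$ prefix, and for a function $g$ define its \emph{$n$-truncation} $g_n$ to be the conditional average of $g$ over cylinders of length $n$ (equivalently, $g_n(x)$ depends only on $x|_n$). The first step is to show that truncations are dense: given $f\in C_A(\Omega)$ and $\varepsilon>0$, the difference $f-f_n$ has $d_A$-Lipschitz norm that can be made smaller than $\varepsilon$ by taking $n$ large, because the only coordinates that distinguish points within a length-$n$ cylinder are coordinates $\ge n$, where the metric has already shrunk by a factor $A_n$, and the Lipschitz constant of $f$ controls the variation there; the hypothesis $A_{n+1}/A_n\to 0$ is what makes the \emph{Lipschitz} (not just sup) norm of $f-f_n$ go to zero, since the relevant ratio of metric scales across the truncation boundary tends to $0$.

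The second and main step is: for a function $g$ that depends only on finitely many coordinates (a "locally constant"/cylinder function), show that $g$ can be perturbed by an arbitrarily $A$-small amount to a function uniquely optimized by a periodic orbit measure, and moreover that this property is \emph{open} in the $A$-norm. Since $g=g_n$ factors through the finite set $\mathcal A^n$ of length-$n$ prefixes, the relevant dynamics is the subshift-of-finite-type structure on $\mathcal A^n$ (a De Bruijn-type graph), and maximizing $\int g\,\d\mu$ over invariant measures becomes a finite linear-programming / maximum-mean-cycle problem on this graph. A generic small perturbation of the edge weights makes the maximizing cycle unique, hence the maximizing measure is the periodic orbit measure $\mu_y$ carried by the corresponding periodic point $y\in\Omega$; call this perturbed function $\tilde g$, with $\|\tilde g-g\|_A<\varepsilon$. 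To get uniqueness \emph{among all} invariant measures on $\Omega$ (not just those factoring through length-$n$ prefixes) and openness in the full $A$-norm, I would add a further correction: subtract a coboundary plus a small multiple of a function vanishing exactly on $\Orb y$ and strictly positive elsewhere, of the form $c\sum_{k}A_{?}(\cdot)$ built to dominate the higher-order fluctuations — concretely, use a "Ma\~n\'e potential"/Bousch-type sub-action argument. That is, after the perturbation, produce a calibrated sub-action $u\in C_A(\Omega)$ with $\tilde g + u - u\circ T \le \max$, with equality exactly on $\Orb y$; the super-continuity of $\tilde g$ (finite range) gives $u$ enough regularity, and the strict gap on the complement of $\Orb y$, combined with $A_{n+1}/A_n\to 0$, survives any sufficiently small $A$-perturbation of $\tilde g$. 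This yields an $A$-norm ball around $\tilde g$ inside $P_y$, i.e.\ $\tilde g\in (P_y)^\circ$.

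Combining the two steps: given arbitrary $f$ and $\varepsilon$, pick $n$ with $\|f-f_n\|_A<\varepsilon/2$, then perturb $f_n$ to $\tilde g\in(P_y)^\circ$ with $\|f_n-\tilde g\|_A<\varepsilon/2$; then $\tilde g\in\bigcup_{y}(P_y)^\circ$ and $\|f-\tilde g\|_A<\varepsilon$, proving density.

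\textbf{Main obstacle.} I expect the genuinely hard part to be the \emph{openness} claim, i.e.\ promoting "uniquely optimized by $\mu_y$" from the finite-dimensional cylinder-function world to a full $A$-norm neighborhood in the infinite-dimensional non-separable space $C_A(\Omega)$. The subtlety is that an $A$-small perturbation can still alter the function on infinitely many long cylinders simultaneously, so one cannot simply invoke finite-dimensional LP stability; the argument must show that the optimizing value's strict gap over non-$\Orb y$ measures is uniform. This is exactly where $A_{n+1}/A_n\to 0$ must be used quantitatively: it forces any competitor measure that "sees" deep coordinates to pay a cost that dwarfs the $A$-norm size of the perturbation, because deep-coordinate variation is metrically negligible relative to shallow-coordinate variation. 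Making this uniform gap estimate precise — ideally via an explicit sub-action whose Lipschitz constant is controlled independently of the perturbation — is the crux, and I anticipate it will require the kind of telescoping/summation estimate over scales that the ratio condition makes convergent.
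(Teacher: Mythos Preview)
First, a bookkeeping point: the statement you were handed is the Contreras--Lopes--Thieullen theorem, which the paper merely \emph{cites} as background and does not prove. Your proposal is in fact an attempt at Theorem~\ref{mainresult}, so I will assess it as such.

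There is a fatal gap in your first step. You assert that $\|f-f_n\|_A\to 0$ for the $n$-truncation $f_n$, but this is false and in fact \emph{cannot} hold: the locally constant functions form a separable subspace, while the paper explicitly notes that $C_A$ is non-separable. Concretely, since $f_n$ is constant on length-$n$ cylinders, for every $k\ge n$ one has $\var_k(f-f_n)=\var_k(f)$, and hence
\[
\Lip_A(f-f_n)\ \ge\ \sup_{k\ge n}\frac{\var_k(f)}{A_k},
\]
which for, say, $f=d_A(\cdot,x_0)$ is bounded below by $1-A_{k+1}/A_k\to 1$ independently of $n$. The hypothesis $A_{n+1}/A_n\to 0$ does not help here: the $A$-Lipschitz constant probes $\var_k(f)/A_k$ at \emph{every} scale $k$, and truncation leaves the scales $k\ge n$ untouched. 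So your density-of-cylinder-functions reduction collapses, and with it the passage to a finite max-mean-cycle problem.

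For comparison, the paper never tries to approximate by locally constant functions. It works directly with an arbitrary $f\in C_A$: pass to the Ma\~n\'e--Conze--Guivarc'h normal form $\hat f\le 0$ (Theorem~\ref{cohom}), locate a minimal $k$-block recurrence in the support of a maximizing measure to manufacture a periodic point $y$, perturb by $-\varepsilon\, d_A(\cdot,\Orb y)$, and then use the Bousch fixed point $q^*$ to show that optimal backward preimages for any function in an $A$-ball around the perturbation must eventually track $\Orb y$. The super-continuity hypothesis enters at exactly one inequality, $\tfrac{\varepsilon}{2}A_k-3LA_{k+1}>0$ for large $k$, which is precisely where $A_{k+1}/A_k\to 0$ is spent. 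Your intuition that the crux is a uniform strict-gap (sub-action) estimate surviving $A$-small perturbations is correct; the paper achieves that gap without ever leaving $C_A$ for a finite-dimensional surrogate.
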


  \begin{theorem*}[Yuan and Hunt \cite{yuanhunt}]
    Let $T:M\to M$ be an Axiom A map or an expanding map from a
    manifold to itself and let $C_\Lip$ denote the class of Lipschitz
    continuous functions.  For any $f\in C_\Lip$ optimized by a
    measure generated by an aperiodic point, there exists an
    arbitrarily small perturbation of $f$ such that that measure is no
    longer an optimizing measure. Further, any $f\in C_\Lip$ optimized
    by a periodic orbit measure can be perturbed to be stably
    optimized by this periodic orbit measure.
  \end{theorem*}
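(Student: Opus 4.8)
The plan is to show that an arbitrary $f\in C_A(\Omega)$ can be perturbed, by an arbitrarily small amount in the $A$-norm, into the interior of some $P_y$. The strategy has two stages: first reduce to the case where the maximizing measure is already a periodic orbit measure $\mu_y$, and then show that such an $f$ lies within $C_A$-distance $\varepsilon$ of a function for which $\mu_y$ is \emph{robustly} the unique maximizer, i.e. an interior point of $P_y$.

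For the first stage I would exploit the super-continuity hypothesis $A_{n+1}/A_n\to 0$ directly, which is where all the work goes. Given $f$ and $\varepsilon>0$, choose $N$ large and replace $f$ by a function $g$ that depends only on the first $N$ coordinates: concretely, let $g$ be constant on each $N$-cylinder, taking there the value of $f$ at some chosen point of that cylinder. The oscillation of $f$ on an $N$-cylinder is at most $\Lip_A(f)\cdot A_N$, so $\|f-g\|_\infty$ is small; the key point is to check that $\Lip_A(f-g)$ is also small. This is exactly where $A_{n+1}/A_n\to 0$ is used: for $x,y$ first differing at place $n$, if $n<N$ then $(f-g)(x)-(f-g)(y)$ is controlled because $f$ and $g$ have comparable increments across such cylinders (both essentially of size $\Lip_A(f)A_n$ relative to $A_n$), while if $n\ge N$ then $g(x)=g(y)$ and the difference quotient is $|f(x)-f(y)|/A_n\le \Lip_A(f)$, but by choosing the representative point in each $N$-cylinder carefully (e.g. the periodic point obtained by repeating the length-$N$ prefix) one can arrange the relevant increments to be governed by $A_{N+1}/A_N$, which is small. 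So $g$ is a locally constant function $C_A$-close to $f$, and locally constant functions form a dense subclass. A locally constant function is in particular a Walters function (it has $\delta$ with $\max_{0\le i<n}d(T^ix,T^iy)\le\delta$ forcing equality on the first $N$ coordinates of every iterate, hence $S_nf(x)=S_nf(y)$); alternatively one can directly observe that maximizing a function of $N$ coordinates is maximizing a function on a subshift of finite type, which always admits a periodic maximizing orbit (the maximizing measure can be taken ergodic, and an ergodic measure maximizing a locally constant function over an SFT is supported on a periodic orbit, by a standard cycle-averaging / Birkhoff-sum argument on the de Bruijn-type graph). Either way, after this first stage we have a $g$ within $\varepsilon/2$ of $f$ whose maximizing measure may be taken to be $\mu_y$ for some periodic $y$.

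For the second stage I would add a small ``tilt toward $\Orb y$'' to $g$. Let $h(x)=\mathrm{dist}\text{-like bump}$: more precisely, set $\psi(x)=A_{k(x)}$ where $k(x)$ is the first coordinate at which $x$ leaves the orbit $\Orb y$ (i.e. $k(x)$ is the largest $k$ such that the length-$k$ prefix of $x$ agrees with the prefix of some point of $\Orb y$), with $\psi\equiv 0$ on $\Orb y$. One checks $\psi\in C_A(\Omega)$ with $\Lip_A(\psi)=1$ (again using $A_{n+1}/A_n\to 0$ to control increments at deep disagreements), $\psi\ge 0$, and $\psi=0$ exactly on $\Orb y$. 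Replace $g$ by $g_\delta:=g-\delta\psi$ with $\delta<\varepsilon/2$. Then $\int g_\delta\,d\mu_y=\int g\,d\mu_y$ is unchanged, while for any other invariant $\nu$ we have $\int\psi\,d\nu>0$ since $\nu$ is not supported on $\Orb y$, so $\int g_\delta\,d\nu=\int g\,d\nu-\delta\int\psi\,d\nu<\int g\,d\mu_y$ provided $\int g\,d\nu\le\int g\,d\mu_y$. Thus $\mu_y$ becomes the \emph{unique} maximizer of $g_\delta$. To get an \emph{open} neighbourhood, note that for any $\tilde f$ with $\|\tilde f-g_\delta\|_\infty<\eta$ one has $\int\tilde f\,d\mu_y\ge\int g\,d\mu_y-\eta$ and $\int\tilde f\,d\nu\le\int g\,d\mu_y-\delta\int\psi\,d\nu+\eta$; the inequality $\int\tilde f\,d\mu_y>\int\tilde f\,d\nu$ then holds as soon as $\delta\int\psi\,d\nu>2\eta$, which fails only for $\nu$ with $\int\psi\,d\nu$ small. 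For such $\nu$ a uniform estimate is needed: I would argue that there is $c>0$ with $\int\psi\,d\nu\ge c\,\|\nu-\mu_y\|$ in an appropriate weak sense on invariant measures — because an invariant measure with very small $\int\psi$ must give very small mass to every cylinder disjoint from $\Orb y$ and hence (using invariance and that $\Orb y$ is a single periodic orbit) be close to $\mu_y$ — and then $\int g$ against such $\nu$ is close to $\int g\,d\mu_y$, so the original (possibly non-strict) inequality $\int g\,d\nu\le\int g\,d\mu_y$ plus the strict gain from $-\delta\psi$ still wins after shrinking $\eta$. This puts a $C_A$-ball (indeed even a $\|\cdot\|_\infty$-ball) around $g_\delta$ inside $P_y$, so $g_\delta\in(P_y)^\circ$, and $\|g_\delta-f\|_A<\varepsilon$.

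The main obstacle is the uniform strictness in the second stage: getting a gap between $\int g\,d\mu_y$ and $\int g\,d\nu$ that does not collapse as $\nu\to\mu_y$, for \emph{every} competing $\nu$ simultaneously. Compactness of the space of invariant measures handles the $\nu$ bounded away from $\mu_y$ immediately, so the real content is a local linear lower bound $\int\psi\,d\nu\gtrsim\mathrm{dist}(\nu,\mu_y)$ near $\mu_y$; this is a ``$\psi$ is a strict sub-action / has a spectral gap at $\mu_y$'' statement, and establishing it cleanly — rather than by ad hoc cylinder estimates — is the delicate part. A secondary, more technical obstacle is verifying that the two perturbations ($f\to g$ locally constant, and $g\to g_\delta$) genuinely have small $A$-Lipschitz seminorm and not merely small sup norm; both rely essentially on $A_{n+1}/A_n\to 0$, and the bookkeeping of difference quotients at disagreement depth $n\ge N$ must be done with some care.
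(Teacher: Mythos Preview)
First, a mismatch: the statement you were handed is the Yuan--Hunt theorem, which the paper merely \emph{cites} as background and does not prove. Your proposal is in fact an attempt at the paper's own Theorem~\ref{mainresult} (density of $\bigcup_y (P_y)^\circ$ in $C_A$ when $A_{n+1}/A_n\to 0$). I will evaluate it as such, since that is where the comparison has content.

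Your Stage~1 has a genuine, fatal gap. You claim that locally constant functions are $\|\cdot\|_A$-dense in $C_A$. They are not: the paper explicitly notes that $C_A$ is non-separable (the functions $d_A(x,\cdot)$ form an uncountable uniformly discrete family), whereas the locally constant functions form a separable set, so density is impossible. Concretely, if $g$ depends only on the first $N$ coordinates and $x,y$ agree on those coordinates but differ at place $n\ge N$, then $g(x)=g(y)$ and hence
\[
\frac{|(f-g)(x)-(f-g)(y)|}{A_n}=\frac{|f(x)-f(y)|}{A_n},
\]
which is the Lipschitz quotient of $f$ itself at depth $n$. No choice of representative point in the cylinder changes this identity; the ``fix'' you propose (choosing periodic representatives) simply does not affect the difference $(f-g)(x)-(f-g)(y)$ once $g$ is constant on the cylinder. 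So $\Lip_A(f-g)$ is bounded below by $\sup_{n\ge N}\var_n(f)/A_n$, which for a generic $f\in C_A$ is of order $\Lip_A(f)$, not small. The super-continuity hypothesis $A_{n+1}/A_n\to 0$ does not help here.

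The paper's route is genuinely different and is designed precisely to sidestep this non-separability obstruction. Rather than approximating $f$ by something ``finite-type'', it passes to the Ma\~n\'e--Conze--Guivarc'h normal form $\hat f\le 0$ (Theorem~\ref{cohom}), locates a periodic $y$ via a minimal $k$-block recurrence in a generic point of a maximizing measure, and then perturbs by $-\varepsilon\,d_A(\cdot,\Orb y)$ --- the same ``tilt'' you use in Stage~2. The openness is \emph{not} obtained from a compactness/linear-lower-bound argument on $\int\psi\,d\nu$ (the obstacle you correctly flag and leave unresolved); instead the paper analyzes the Bousch fixed point $q^*$ of $\Phi_q$ for the perturbed and normalized function $q$, and shows that along any optimal preimage chain the co-cycle $q^{(t_n-t_{n-1})}$ drops by a fixed amount $\alpha>0$ at each excursion away from $\Orb y$. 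The positivity of $\alpha$ is exactly where $A_{n+1}/A_n\to 0$ enters: one needs $\tfrac{\varepsilon}{2}A_k-3LA_{k+1}>0$ for some $k$. Boundedness of $q^*$ then forces only finitely many excursions, hence every such chain accumulates on $\Orb y$, giving unique maximization for every function in a full $\|\cdot\|_A$-ball. Your Stage~2 tilt is the right object, but the mechanism the paper uses to turn it into an open set is the preimage-chain/fixed-point argument, not a measure-continuity estimate.
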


  With the inclusion of this paper, the current state of the standing
  conjecture is somewhat curious.  Notice that super-continuous
  functions are Lipschitz functions and Lipschitz functions are
  Walters functions.  So, for both a larger and a smaller class than
  Lipschitz functions, analogs of Conjecture \ref{conj} have shown to
  be true, and yet proof of the Lipschitz case remains elusive.

\subsection{Notation \& Definitions}
For some finite alphabet $\mathcal A$, let $\Omega=\mathcal A^\N$ be the
space of one-sided infinite sequences on $\mathcal A$.  For us $\N$
includes $0$.

$T:\Omega\to\Omega$ is the usual shift operator, with $T$-invariant Borel
probability measures on $\Omega$ denoted $\mathcal M$.  We write $\Orb x$
for the orbit of $x$ under $T$, and we say $S$ is a \emph{segment} of
$\Orb x$ if it is an ordered list of the form
$(T^ix,T^{i+1}x,\ldots,T^{i+p-1}x)$ for some $i,p$.  Abusing notation, we
may say $S\subset \Orb x$.

We use $d$ to denote the standard metric on sequences.  That is,
$d(x,y)=2^{-k}$ where $k=\inf\{i:(x)_{i}\neq(y)_{i}\}$ and $(z)_i$ is
the $i$th symbol of $z$.  We follow the convention that
$2^{-\infty}=0$.

  \begin{definition}[Shadowing]
    For two points $x,y$, we say that $x$ \emph{$\varepsilon$-shadows}
    a segment $S=(T^my,\ldots,T^{m+n-1}y) \subset\Orb y$ if
    \[
      d(T^ix,T^{i+m}y)\leq \varepsilon
    \]
    for all $0\leq i<n$.
  \end{definition}
  \begin{definition}[$\varepsilon$-close]
    A point $x$ is said to stay $\varepsilon$-close to a set $Y$ for
    $p$ steps if for all $0\leq i<p$,
    \[
      d(T^ix,Y)\leq \varepsilon.
    \]
  \end{definition}

  \begin{notation}[Ergodic Average]
    For a function $f$ and a point $x$,
    \[
    \op{f}(x) = \lim_{N\to\infty} \frac{1}{N} \sum_{i=0}^{N-1}
    f(T^ix),
    \]
    when the limit exists.
  \end{notation}

  \begin{notation}
    If $x=a_0a_1a_2\cdots$ is a point,
    \[
      (x)_{i}^j = a_{i}a_{i+1}\cdots a_{j-1}a_j
    \]
    is the subword of $x$ from position $i$ to $j$.
  \end{notation}

\section{Summable Variation}
\begin{definition}[Variation]\index{Variation}\index{Notation!$\var_I(f)$}
  The \emph{variation} of a function over level $k$ cylinder sets
  is the maximum a function changes in a distance of $2^{-k}$.  That is,
  if $f$ is a function
  \[
  \var_k(f) = \sup\{|f(x)-f(y)|:d(x,y)\leq 2^{-k}\}.
  \]
\end{definition}

Note that in a shift space, we have additional structure because
distances can only take values of the form $2^{-k}$.

\begin{definition}[Summable Variation]\index{Variation!Summable}
  The function $f$ is of \emph{summable variation} if
  \[
  \sum_{k=0}^\infty \var_k(f) < \infty.
  \]
\end{definition}

\begin{notation}\index{Notation!$\V_k(f)$}
  $\V_k(f)$ represents the tail sum of the variation of $f$ over distances
  smaller than $2^{-k+1}$.  That is
  \[ \V_k(f) = \sum_{j=k}^{\infty} \var_j(f). \]
\end{notation}

Functions of summable variation form a much larger class than Lipschitz
functions. However the general method used in this paper to show Theorem
\ref{mainresult} is to perturb functions by a small multiple of some
canonical ``sharpest'' function.  Yuan and Hunt used this strategy when
dealing with Lipschitz functions by perturbing by $-d(x,\Orb y)$
\cite{yuanhunt}.  But, for functions of summable variation (with the
natural norm of $\|f\|=V_0(f)+\|f\|_\infty$), there is no such
``sharpest'' function. Using the $A$-norms gives us these sharpest
functions again.

We will frequently refer to $A$-metrics and $A$-norms as briefly
introduced earlier.

\begin{definition}[$A$-sequence]\label{def:Anorm}
  An \emph{$A$-sequence}, $(A_n)_{n=0}^\infty$, is a decreasing sequence
  of positive numbers with $A_n\to 0$.

  If there exists $0<\delta<1$ such that $A_{n+1}/A_n<1-\delta$ for
  each $n$, then we say that $(A_n)$ is \emph{lacunary}.
\end{definition}

  Recall that the metric $d_A$ is defined by $d_A(x,y) = A_n$
  if $(x)_i=(y)_i$ for $0\le i<n$ but $(x)_n\ne (y)_n$.

\begin{definition}[$A$-norm]
  If $(A_n)$ is an $A$-sequence, the Lipschitz constant of $f$ is
  $\Lip_A(f)=\sup_k \var_k(f)/A_k$. The $A$-norm is defined by
  $\|f\|_A=\Lip_A(f)+\|f\|_\infty$.
\end{definition}

Of course if $A$ is the sequence $(2^{-n})_{n=0}^\infty$, we recover the
standard distance and Lipschitz norm. We write the set of Lipschitz
functions with respect to $d_A$ as $C_A(\Omega)$ or simply $C_A$.

Notice that since $A$ satisfies $A_n\to 0$, $C_A(\Omega)\subset
C(\Omega)$ is a subset of the continuous functions on $\Omega$. Further,
$C_A$ is a non-separable Banach space as the functions
$f_x(\cdot)=d(x,\cdot)$ for $x\in\Omega$ are an uncountable uniformly
discrete set.

\section{Preliminary lemmas}
We will first establish several results that do not depend on super
continuity.

\begin{definition}[In Order for One Step]
  For points $x,y$, let $S=(T^jy,T^{j+1}y,\ldots,$ $T^{j+k}y)\subset \Orb y$,
  and suppose that there is a unique closest point $y'\in S$
  to $x$.  That is,
  \[
  d(x,y')< d(x,S\backslash\{y'\}).
  \] We say that $x$
  \emph{follows $S$ in order for one step} if
  $Ty'\in S$ and $Ty'$ is the unique closest point to $Tx$.  That is $Ty'\in S$ and
  \[
  d(Tx,Ty')<d(Tx,S\backslash\{Ty'\}).
  \]
\end{definition}
\begin{definition}[In Order]
  For some point $y$, let $S=(T^jy,T^{j+1}y,\ldots,$ $T^{j+k}y)\subset \Orb y$.
  For some point $x$, we say that \emph{$x$ follows $S$ in order} for $p$ steps
  if $x,Tx,\ldots, T^{p-1}x$ each follow $S$ in order for one step.
\end{definition}
Following in order is very similar to the concept of shadowing except
that the distance requirement in shadowing is replaced by a uniqueness
requirement.  The following In Order Lemma is due to Yuan and Hunt
\cite{yuanhunt}.

\begin{lemma}[In Order Lemma]\index{In Order Lemma}
  \label{inorderlemma}
  Let $y$ be a periodic point of period $p$, and let
  \[
  \rho \le \min_{0\leq i< j<p} d(T^iy,T^jy)/4.
  \]
  For any point $x$, if $x$ stays $\rho$-close to $\Orb y$ for $k+1$
  steps, then $x$ follows $\Orb y$ in order for $k$ steps.  That is,
  there exists some $i'$ such that for $0\leq j\le
  k$, \[d(T^{j}x,T^{i'+j}y)\leq\rho.\]
\end{lemma}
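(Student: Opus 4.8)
The plan is to prove the In Order Lemma by induction on the number of steps, tracking a single index $i'$ that witnesses which point of $\Orb y$ is being shadowed. First I would set up the base case: since $x$ stays $\rho$-close to $\Orb y$, there is some $i'$ with $d(x, T^{i'}y) \le \rho$. I claim this $i'$ is unique, because if $d(x, T^{i'}y) \le \rho$ and $d(x, T^{j}y) \le \rho$ with $T^{i'}y \ne T^{j}y$, then by the triangle inequality $d(T^{i'}y, T^{j}y) \le 2\rho \le \min_{i<j} d(T^iy, T^jy)/2$, a contradiction. So $T^{i'}y$ is the unique closest point of $\Orb y$ to $x$, and moreover $d(x, \Orb y \setminus \{T^{i'}y\}) \ge \min d(T^a y, T^b y) - \rho > 2\rho \ge d(x, T^{i'}y)$ with room to spare.

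Next I would carry out the inductive step: assuming $d(T^{j}x, T^{i'+j}y) \le \rho$ with $T^{i'+j}y$ the unique closest point of $\Orb y$ to $T^{j}x$, I want to show $T^{j+1}x$ follows $\Orb y$ in order for one step, i.e. that $T^{i'+j+1}y = T(T^{i'+j}y)$ is the unique closest point of $\Orb y$ to $T^{j+1}x$. Here I would use the key metric fact that the shift is $2$-Lipschitz: $d(Tx, Ty) \le 2 d(x, y)$ whenever $d(x,y) \le 1/2$ (equivalently, if $x$ and $y$ agree in the first symbol, their images are twice as close-or-far). Actually, more precisely, if $d(T^{j}x, T^{i'+j}y) \le \rho \le 1/4 < 1/2$, then $T^{j}x$ and $T^{i'+j}y$ share their first symbol, so $d(T^{j+1}x, T^{i'+j+1}y) \le 2\rho$. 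But we also know $T^{j+1}x$ stays $\rho$-close to $\Orb y$ (for $k+1-(j+1)$ more steps, which is at least one when $j < k$), so there is some $T^{\ell}y$ with $d(T^{j+1}x, T^{\ell}y) \le \rho$. By the triangle inequality $d(T^{i'+j+1}y, T^{\ell}y) \le 3\rho \le \min d(T^ay,T^by) \cdot 3/4 < \min d(T^ay, T^by)$, forcing $T^{\ell}y = T^{i'+j+1}y$. Thus $d(T^{j+1}x, T^{i'+j+1}y) \le \rho$, and the same uniqueness argument as in the base case shows it is the unique closest point. This closes the induction.

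The main obstacle, such as it is, is bookkeeping rather than mathematical depth: one must be careful that the constant $\rho \le \min_{0 \le i < j < p} d(T^iy, T^jy)/4$ is exactly tight enough to absorb both the doubling from one application of $T$ and the triangle-inequality slack, and that the ``stays $\rho$-close for $k+1$ steps'' hypothesis supplies a nearby orbit point at \emph{each} of the steps $0, 1, \ldots, k$ used to pin down $i'$ inductively. A secondary subtlety is the periodicity: indices $i'+j$ should be read modulo $p$, and one must check the pairwise-distance bound is over all distinct pairs in $\Orb y$, which has exactly $p$ points since $y$ has period $p$; the quantity $\min_{0 \le i < j < p} d(T^iy, T^jy)$ is therefore the minimal separation of the orbit and is positive. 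Once these are in place the argument is a routine triangle-inequality induction, and I would present it compactly in that form.
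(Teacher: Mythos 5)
Your proof is correct and follows essentially the same inductive strategy as the paper: pin down a unique shadowed point at step $0$, apply the doubling property of the shift, and use the ``stays $\rho$-close'' hypothesis at the next step to conclude $d(T^{j+1}x,T^{i'+j+1}y)\le\rho$ again. The only cosmetic difference is that the paper exploits the ultrametric inequality ($d(y',y'')\le\max\{d(x,y'),d(x,y'')\}$) to establish uniqueness of the closest orbit point in one line, whereas you use the ordinary triangle inequality with the slightly tighter bookkeeping $3\rho<\gamma$; both are comfortably absorbed by the hypothesis $\rho\le\gamma/4$.
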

\begin{proof}
  Let $\gamma=\min_{0\leq i< j<p}d(T^iy,T^jy)$. We first
  derive a fact about the shift space due to its
  ultrametric properties.  Suppose $y',y''\in \Orb y$ and for some
  point $x$, $d(x,y'),d(x,y'')\leq \gamma/2$.  By the ultrametric
  triangle inequality we have
  \begin{equation}
    \label{equ:inorderlemma}
    d(y',y'') \leq \max(d(x,y'),d(x,y'')) \leq \gamma/2.
  \end{equation}
  Since $\gamma$ was the smallest distance between points in $\Orb y$,
  equation (\ref{equ:inorderlemma}) gives $y'=y''$.  This shows that
  for any point $x$, if $d(x,\Orb y)\leq \gamma/2$, then there is a
  unique closest point in $\Orb y$ to $x$.

  Let $x$ be a point that stays $\rho$-close to $\Orb y$ for $k+1$ steps.
  By definition, we have
  \[
  d(x,\Orb y) \leq \rho \leq \gamma/4.
  \]
  Since $\gamma$ is the minimum distance between points in $\Orb y$,
  there is a unique $i'$ such that
  \[
  d(x,T^{i'}y) \leq \rho.
  \]
  We then have that
  \[
  d(Tx,T^{i'+1}y)\leq 2\rho \leq \gamma/2,
  \]
  and so $T^{i'+1}y$ is the unique closest point to $Tx$.  Thus, $x$
  follows $\Orb y$ in order for one step.  But, by assumption we have
  $d(Tx,\Orb y)\leq \rho$, so $d(Tx,\Orb y)=d(Tx,T^{i'+1}y)$ gives us
  that $Tx$ follows $\Orb y$ in order for one step and so $x$ follows
  $\Orb y$ in order for two steps.  Continuing by induction, we see
  that $x$ follows $\Orb y$ in order for $k$ steps; that is
  \[
  d(T^{j}x,T^{i'+j}y)\leq \rho
  \qquad \text{for}\quad 0\leq j\le k.
  \]

\end{proof}

\begin{lemma}[Shadowing Lemma]
  \label{shadowlemma}
  For a point $y$, let $S=(T^iy,T^{i+1}y,\ldots,T^{i+k-1}y)$ be a
  segment of $\Orb y$.  For any $\rho<1$, if a point $x$
  $\rho$-shadows $S$ for $k$ steps, the distance from $T^jx$ to  $S$ for $0\leq
  j<k$ is bounded by
  \[
  d(T^{j}x,T^{i+j}y)\leq\rho 2^{-((k-1)-j)}.
  \]
\end{lemma}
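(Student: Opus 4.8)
The plan is to read the conclusion straight off the ultrametric structure of the shift, using only the following elementary fact: if $u,v\in\Omega$ satisfy $d(u,v)<1$, then $d(Tu,Tv)=2\,d(u,v)$. Indeed, $d(u,v)<1$ means $u$ and $v$ agree in position $0$ and first disagree in some position $m\ge 1$; since $(Tw)_\ell=(w)_{\ell+1}$, the points $Tu$ and $Tv$ then first disagree in position $m-1$, so $d(Tu,Tv)=2^{-(m-1)}=2\cdot 2^{-m}=2\,d(u,v)$ (and the identity is trivial when $u=v$). Equivalently, $d(u,v)=\tfrac12 d(Tu,Tv)$ whenever $d(u,v)<1$.

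Next I would set $a_j=d(T^jx,T^{i+j}y)$ for $0\le j<k$. The hypothesis that $x$ $\rho$-shadows $S$ for $k$ steps says precisely that $a_j\le\rho$ for all these $j$, and since $\rho<1$ we have $a_j<1$ throughout. Applying the fact above with $u=T^jx$, $v=T^{i+j}y$ for each $0\le j\le k-2$ gives $a_j=\tfrac12 a_{j+1}$; iterating this downward from index $k-1$ yields $a_j=2^{-((k-1)-j)}a_{k-1}$ for every $0\le j<k$. Bounding $a_{k-1}\le\rho$ by the hypothesis one last time gives $d(T^jx,T^{i+j}y)=a_j\le\rho\,2^{-((k-1)-j)}$, which is the claim.

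I expect no real obstacle here; the only subtlety is that the doubling identity genuinely requires $d(u,v)<1$, which is exactly why the statement hypothesizes $\rho<1$ rather than $\rho\le 1$: if some $a_j$ were allowed to equal $1$, a single left shift could collapse that distance to anything and destroy the geometric decay. An equivalent way to organize the computation, avoiding the auxiliary sequence $a_j$, is to let $m_j$ be the first coordinate at which $T^jx$ and $T^{i+j}y$ disagree, observe that the shadowing hypothesis forces $m_j\ge 1$ for each $j<k$, note that $m_{j+1}=m_j-1$ so that $m_j=m_{k-1}+((k-1)-j)$, and combine this with $2^{-m_{k-1}}\le\rho$ (using the convention $2^{-\infty}=0$ in the degenerate case $T^{k-1}x=T^{i+k-1}y$).
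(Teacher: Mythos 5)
Your proof is correct. You argue locally, via the exact doubling identity $d(Tu,Tv)=2\,d(u,v)$ for $d(u,v)<1$, and propagate the bound backwards through the recursion $a_j=\tfrac12 a_{j+1}$. The paper argues globally: it sets $l=\inf\{w:2^{-w}\le\rho\}$, observes that $\rho$-shadowing forces each pair $T^jx$, $T^{i+j}y$ to agree on coordinates $0,\dots,l-1$, stitches these overlapping windows together to conclude $(x)_0^{k+l-2}=(T^iy)_0^{k+l-2}$, and then reads off $d(T^jx,T^{i+j}y)\le 2^{-(k+l-1-j)}\le\rho\,2^{-((k-1)-j)}$ since $2^{-l}\le\rho$. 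The two approaches are equivalent in content but organized differently: the paper produces one long common prefix in a single step, while you obtain the geometric decay term by term. Your alternative formulation at the end (working with the first-disagreement indices $m_j$ and noting $m_{j+1}=m_j-1$) is essentially the paper's argument viewed coordinate by coordinate. Both handle the degenerate case $T^jx=T^{i+j}y$ cleanly, and you correctly identify $\rho<1$ as the hypothesis that keeps the doubling identity valid at every step.
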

\begin{proof}
  Let $l=\inf\{w:2^{-w}\leq \rho\}$ and note $\rho <1$ implies $l\geq
  1$.  Since $x$ $\rho$-shadows $S$ for $k$ steps, we have
  $(T^{j}x)_0^{l-1}=(T^{i+j}y)_0^{l-1}$ for $0\le j\le k-1$, and so
  $(x)_{0}^{k+l-2}=(T^iy)_{0}^{k+l-2}$, which gives the result.
\end{proof}

\begin{lemma}[Parallel Orbit Lemma]
  \label{parallelorbitlemma}
  For a function of summable variation $f$, if $T^mx$ $2^{-r}$-shadows
  $\Orb y$ for $k$ steps (i.e., there exists $i$ so
  $d(T^{m+j}x,T^{i+j}y)\leq 2^{-r}$ for $0\leq j< k$), then for $r>0$,
  \[
  \sum_{j=0}^{k-1} \left|f(T^{m+j}x)-f(T^{i+j}y)\right|
  \leq \V_{r}(f).
  \]
\end{lemma}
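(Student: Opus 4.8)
The plan is to feed the hypothesis directly into the Shadowing Lemma (Lemma~\ref{shadowlemma}) and then recognize the resulting sum as a partial sum of the tail series defining $\V_r(f)$. First I would observe that the hypothesis says exactly that $T^m x$ $2^{-r}$-shadows the segment $S=(T^iy,T^{i+1}y,\ldots,T^{i+k-1}y)$ for $k$ steps, and that $\rho=2^{-r}<1$ because $r>0$, so Lemma~\ref{shadowlemma} applies and yields, for $0\le j<k$,
\[
  d(T^{m+j}x,T^{i+j}y)\le 2^{-r}\,2^{-((k-1)-j)} = 2^{-(r+(k-1)-j)}.
\]
The reason for invoking the Shadowing Lemma rather than the raw bound $d\le 2^{-r}$ is precisely this geometric sharpening: the iterates are not merely $2^{-r}$-close but become exponentially closer as $j$ moves away from the end of the shadowed block, and that extra decay is what makes the sum converge to something as small as $\V_r(f)$.

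Next, by the definition of $\var_n$, two points within distance $2^{-n}$ have $f$-values differing by at most $\var_n(f)$, so the displayed estimate gives $\bigl|f(T^{m+j}x)-f(T^{i+j}y)\bigr|\le \var_{r+(k-1)-j}(f)$ for each $j$. Summing over $j=0,1,\ldots,k-1$ and reindexing by $l=r+(k-1)-j$, which runs through exactly $r,r+1,\ldots,r+k-1$, we obtain
\[
  \sum_{j=0}^{k-1}\bigl|f(T^{m+j}x)-f(T^{i+j}y)\bigr|
  \le \sum_{l=r}^{r+k-1}\var_l(f)
  \le \sum_{l=r}^{\infty}\var_l(f) = \V_r(f),
\]
where the last sum is finite since $f$ has summable variation. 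This is the asserted inequality.

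I do not expect any real obstacle here; the only point requiring care is the index bookkeeping, making sure the finite sum over the $k$ shadowed steps lines up with the tail $\sum_{l\ge r}\var_l(f)$, together with the trivial check that the $\rho<1$ hypothesis of Lemma~\ref{shadowlemma} is automatic from $r>0$. (If one wishes to allow $r$ to be an arbitrary positive real rather than an integer, one may replace $r$ by $\ceil{r}$ at the outset, using that distances in the shift take only the values $2^{-n}$; this changes nothing in the argument.)
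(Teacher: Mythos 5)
Your proof is correct and follows the same route as the paper: apply the Shadowing Lemma to upgrade the uniform $2^{-r}$ bound to the exponentially decaying bound $d(T^{m+j}x,T^{i+j}y)\le 2^{-(r+(k-1)-j)}$, convert distances to variations, and reindex the resulting sum as a partial sum of the tail $\sum_{l\ge r}\var_l(f)=\V_r(f)$. The added remarks about why the sharpening is needed and about non-integer $r$ are sensible but not substantive departures.
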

\begin{proof}
  Suppose $x,y$ are points such that $d(T^{m+j}x,T^{i+j}y)\le 2^{-r}$
  where $r\ge 1$ for $0\le j<k$. The Shadowing Lemma (Lemma
  \ref{shadowlemma}) gives us that
  \[
  d(T^{m+j}x,T^{i+j}y)\leq 2^{-(r+(k-1)-j)}.
  \]
  We then have
  \[
  \sum_{j=0}^{k-1} |f(T^{m+j}x)-f(T^{i+j}y)|
  \leq \sum_{j=r}^{r+k-1} \var_j(f) \leq \V_{r}(f).
  \]
\end{proof}

\section{Ma\~n\'e-Conze-Guivarc'h normal form and main result}

Heuristically, let us consider the following: Suppose $f$ is optimized
by $\mu_{\max}$ and $\int f\d\mu_{\max} = 0$. We will define a
function $f^*$ to represent the ``payoff of going backwards to
infinity.''  Before we describe what $f^*$ means, let us consider the
payoff of going backwards a finite number of steps. For a point $x$,
there is some point $a_1^1x\in T^{-1}x$ such that $f(a_1^1x)\geq
f(b_1x)$ for any symbol $b_1$.  In other words, $a_1^1x$ is a maximal
one-step backwards extension of $x$.  Continuing, there is some point
$a_2^2a_1^2x\in T^{-2}x$ so that $f(a_2^2 a_1^2x) + f(a_1^2)\geq f(b_2
b_1x)+f(b_1x)$ for any word $b_2 b_1$, making $a_2^2a_1^2x$ a maximal
two-step backwards extension of $x$.  It is important to note that the
symbol $a_1^2$ need not be the same as the symbol $a_1^1$, and so it
is in no way immediate that there should be some convergent way to
pick an infinite maximal backwards extension of $x$.

However, ignoring these issues for the moment, one can imagine that
$n$-step backwards extensions of $x$ look more and more like generic
points of $\mu_{\max}$ (if $\mu_{\max}$ is a periodic orbit measure,
this should be especially plausible).  We now informally define $f^*$
as
\[
f^*(x) = f(a_1^\infty x) + f(a_2^\infty a_1^\infty x) + f(a_3^\infty
a_2^\infty a_1^\infty x) + \cdots,
\]
where $\cdots a_3^\infty a_2^\infty a_1^\infty x$ is an infinite
maximal backwards extension of $x$.  Since $\int f\d\mu_{\max}=0$, it
is reasonable to expect that if $f^*$ converges, it is bounded above.
Ignoring any issues of convergence, consider
\[
f^*\circ T-f^*.
\]
Suppose $x=x_0x_1\cdots$ is a point with maximal backwards extension
$\cdots a_2a_1x_0x_1\cdots$. We immediately see $(f^*\circ T-f^*)(x)
\geq f(x)$, since either the maximal backwards extension of
$Tx=x_1x_2\cdots$ is $\cdots a_2a_1x_0x_1\cdots$, which would give us
$(f^*\circ T-f^*)(x) = f(x)$, or there is an alternative backwards
extension of $Tx$ that yields a bigger payoff than $\cdots
a_2a_1x_0x_1\cdots$ and so $(f^*\circ T-f^*)(x) > f(x)$.

Since $f^*\circ T-f^*$ is a co-boundary (a function of the form
$h-h\circ T$) and so integrates to zero with respect to any invariant
measure, the function $\hat f = f-(f^*\circ T-f^*)$ is co-homologous
to $f$ (and so $\int f\d\mu = \int \hat f\d\mu$ for all invariant
measures $\mu$), with the added property that $\hat f\leq 0$.

The Ma\~n\'e-Conze-Guivarc'h procedure is a way of producing a well
defined $f^*$.  We use a method due to Bousch \cite{bousch}, which
produces $f^*$ as a fixed point of an operator that reflects the idea of
a maximal backwards extension.

For $f\in C_A$, define the operator $\Phi_f:C_A\to C_A$ by
\[
(\Phi_fg)(x)=\max_{y\in T^{-1}x} \{ (f+g)(y) \}.
\]

\begin{proposition}[Bousch]
  \label{fixedpoint}
  Let $(A_n)$ be a lacunary $A$-sequence. For a fixed function $f\in C_A$
  with $\sup_{\mu\in\mathcal M}\int f\d\mu=0$, the operator $\Phi_f$
  as defined above has a fixed point.
\end{proposition}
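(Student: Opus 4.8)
The plan is to produce the fixed point via the Schauder--Tychonoff theorem applied to a suitably \emph{renormalized} version of $\Phi_f$. A renormalization is unavoidable, because $\Phi_f$ commutes with the addition of constants, $\Phi_f(g+c)=\Phi_f g+c$, and hence cannot fix any point of a compact set on the nose. First I would record that $\Phi_f$ is non-expansive for the sup-norm, $\|\Phi_f g-\Phi_f g'\|_\infty\le\|g-g'\|_\infty$, since a $\max$ of functions is $1$-Lipschitz in each argument. The crucial input, and the one place lacunarity is used in this proposition, is a contraction estimate on the Lipschitz part: if $x,x'$ first differ at position $n$, then the preimages $ax,ax'$ obtained by prepending the same symbol first differ at position $n+1$, so $\var_n(\Phi_f g)\le(\Lip_A f+\Lip_A g)A_{n+1}$, and therefore
\[
\Lip_A(\Phi_f g)\le\Big(\sup_n \tfrac{A_{n+1}}{A_n}\Big)\big(\Lip_A f+\Lip_A g\big)\le(1-\delta)\big(\Lip_A f+\Lip_A g\big).
\]
Taking $L:=\tfrac{1-\delta}{\delta}\Lip_A f$, one checks that $\Lip_A g\le L$ implies $\Lip_A(\Phi_f g)\le L$, so the equi-Lipschitz ball $\{g:\Lip_A g\le L\}$ is $\Phi_f$-invariant (this also confirms $\Phi_f$ maps $C_A$ into $C_A$).

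Next I fix a reference measure $\mu_0$ on $\Omega$ (say, the uniform Bernoulli measure) and set $\Psi_f g=\Phi_f g-\int\Phi_f g\,\d\mu_0$. Then $\Psi_f$ maps $K=\{g\in C_A:\Lip_A g\le L,\ \int g\,\d\mu_0=0\}$ into itself, since subtracting a constant does not change $\Lip_A$ and the integral normalization holds by construction. The set $K$ is convex, and compact in the sup-norm topology on $C(\Omega)$ by Arzel\`a--Ascoli: $\Lip_A g\le L$ with $A_n\to0$ is an equicontinuity condition, and combined with $\int g\,\d\mu_0=0$ it gives the uniform bound $|g(x)|=|\int(g(x)-g(y))\,\d\mu_0(y)|\le LA_0$; closedness of $K$ in $C(\Omega)$ is immediate. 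Since $\Psi_f$ is sup-norm continuous (the non-expansive $\Phi_f$ followed by subtraction of a continuous linear functional), Schauder--Tychonoff yields $g^*\in K$ with $\Psi_f g^*=g^*$, i.e.
\[
\Phi_f g^*=g^*+c,\qquad c:=\int\Phi_f g^*\,\d\mu_0\in\R .
\]

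It remains to show $c=0$, which is where the hypothesis $\sup_{\mu\in\mathcal M}\int f\,\d\mu=0$ enters, and the step I expect to require the most care. The relation $\Phi_f g^*=g^*+c$ says: for every $x$ and every $y\in T^{-1}x$ we have $f(y)\le g^*(Ty)-g^*(y)+c$, with equality for at least one such $y$. The inequality gives $f\le g^*\circ T-g^*+c$ pointwise, hence $\int f\,\d\mu\le c$ for every $\mu\in\mathcal M$, so $c\ge 0$. For the reverse inequality I would use the equality case iteratively to build a backward orbit $\cdots,z_2,z_1,z_0$ with $Tz_{i+1}=z_i$ and $f(z_{i+1})=g^*(z_i)-g^*(z_{i+1})+c$; telescoping gives $\tfrac1n\sum_{i=1}^n f(z_i)=c+\tfrac{g^*(z_0)-g^*(z_n)}{n}\to c$, while the empirical measures $\tfrac1n\sum_{j=0}^{n-1}\delta_{T^jz_n}=\tfrac1n\sum_{i=1}^n\delta_{z_i}$ have, along a subsequence, a weak-$*$ limit $\mu^*$ which is $T$-invariant (Krylov--Bogolyubov, using $\|g^*\|_\infty<\infty$ and continuity of $f$) and satisfies $\int f\,\d\mu^*=c$; hence $c\le\sup_{\mu\in\mathcal M}\int f\,\d\mu=0$. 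Thus $c=0$, so $\Phi_f g^*=g^*$ with $g^*\in C_A$, as required. The main obstacle is really the bookkeeping in this last paragraph: ensuring the backward orbit exists (it does, since the $\max$ defining $\Phi_f$ is attained) and that the limiting empirical measure is legitimately invariant; the compactness engine itself is routine once the lacunary contraction estimate is in hand.
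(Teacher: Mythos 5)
Your proof is correct and follows essentially the same route as the paper's sketch (which defers the details to Bousch): establish $\Lip_A$-invariance of a ball using the lacunary ratio, factor out the constant ambiguity (the paper quotients by constants where you subtract a reference integral), apply Schauder on the resulting compact convex set, and then use the normalization hypothesis $\sup_\mu\int f\,\d\mu=0$ to kill the additive constant. Your final paragraph spells out the $c=0$ argument that the paper only gestures at, but the underlying mechanism is identical.
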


The proof follows standard lines with minor adaptations for the case of
$A$-norms rather than Lipschitz norms. We briefly summarize the steps,
referring the reader to Bousch \cite{bousch} for more details.

\begin{proof}[Proof sketch]
  Let $A_{n+1}/A_{n}<1-\delta$ for all $n$ (where $0<\delta<1$).
  We claim that $\Phi_f$ maps $C=\{g\colon\Lip_A(g)
   \le \Lip_A(f)/\delta\}$ into itself. We do part of this step in detail
   since we need a fact from it later. Let $g\in C$
   and let $x$ and $x'$ differ first in
   their $(n-1)$st coordinates. Using the notation $ix$ to
   denote the sequence with its first symbol defined by $(ix)_0=i$ and
   all remaining symbols defined by $(ix)_{k+1}=x_k$, we have
   \begin{align*}
     \Phi_f(g)(x)-\Phi_f(g)(x')&=\max_i(f(ix)+g(ix))-\max_j(f(jx')+g(jx'))\\
     &\le \max_i(f(ix)+g(ix)-f(ix')-g(ix'))\\
     &\le \var_{n}(f)+\var_{n}(g)
   \end{align*}
   By symmetry we deduce
   \begin{equation}\label{eq:bouschfactoid}
   \var_{n-1}(\Phi_f(g))\le \var_n(f)+\var_n(g).
   \end{equation}
   Straightforward manipulation then shows that $\Phi_f(g)\in C$.

   Taking a quotient of $C$
   by the relation $\sim$ where two functions $g$ and $g'$ are related
   if they differ by a constant,
   one obtains a compact (with respect to the quotient of the supremum
   norm topology) convex set $C/\sim$
   on which $\Phi_f$ acts continuously. Hence, there is a fixed point.
   This fixed point corresponds to a function $h\in C$ such that $\Phi_f(h)=h+\beta$ for
   some constant $\beta$. One then shows that $\sup_{\mu\in\mathcal M}
   \int f\,d\mu=0$ implies $\beta=0$
\end{proof}

\begin{theorem}
  \label{cohom}
  Let $(A_n)$ be a lacunary $A$-sequence.
  There exists a constant $\gamma_A>1$, dependent only on the choice of $A$-sequence, such that for all $f\in C_A$ with
  $\sup_{\mu\in \mathcal M}\int f\d\mu = 0$,
  there exists a co-homologous function $\hat f$ with $\hat f\leq 0$
  and
  \[
  \|\hat f\|_A \leq \gamma_A \|f\|_A \qquad \V_n\hat f \leq \gamma_A
  \|f\|_A A_n.
  \]
\end{theorem}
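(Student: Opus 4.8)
The plan is to extract $\hat f$ directly from the fixed point furnished by Proposition~\ref{fixedpoint}. Write $\delta\in(0,1)$ for the lacunarity constant of $(A_n)$, and let $h\in C_A$ satisfy $\Phi_f h=h$, i.e. $h(x)=\max_{y\in T^{-1}x}\bigl(f(y)+h(y)\bigr)$ for every $x$; since the fixed point lies in the set $C$ from the proof of Proposition~\ref{fixedpoint}, it obeys $\Lip_A(h)\le\Lip_A(f)/\delta$. Set $\hat f:=f+h-h\circ T$. As $\hat f-f=h-h\circ T$ is a coboundary, $\hat f$ is co-homologous to $f$, and $\hat f\le 0$ is immediate: evaluating the defining maximum at $Tx$ on the particular preimage $x\in T^{-1}(Tx)$ gives $h(Tx)\ge f(x)+h(x)$, that is, $\hat f(x)\le 0$.

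It remains to produce a constant $\gamma_A$ depending only on $A$ (through $\delta$ and $A_0$) with $\|\hat f\|_A\le\gamma_A\|f\|_A$ and $\V_n\hat f\le\gamma_A\|f\|_A A_n$, and for this I would estimate scale by scale. The essential input is inequality~(\ref{eq:bouschfactoid}) applied to the fixed point itself, $\var_{n-1}(h)\le\var_n(f)+\var_n(h)$, which with $\var_n(f)\le\Lip_A(f)A_n$ and $\var_n(h)\le\Lip_A(f)A_n/\delta$ yields $\var_{n-1}(h)\le(1+1/\delta)\Lip_A(f)A_n$. Since $\var_n(h\circ T)\le\var_{n-1}(h)$ for $n\ge 1$ (and $\var_0(h\circ T)\le\var_0(h)$ trivially), one gets for every $n$
\[
\var_n(\hat f)\le\var_n(f)+\var_n(h)+\var_n(h\circ T)\le c\,\Lip_A(f)\,A_n,\qquad c:=2+2/\delta,
\]
so $\Lip_A(\hat f)\le c\,\Lip_A(f)$, and a geometric tail sum gives $\V_n\hat f=\sum_{j\ge n}\var_j(\hat f)\le(c/\delta)\Lip_A(f)A_n$. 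For the supremum norm, $\hat f\le 0$ reduces matters to bounding $-\hat f(x)=h(Tx)-h(x)-f(x)$; every $y\in T^{-1}(Tx)$ agrees with $x$ away from the $0$th coordinate, so $d(y,x)\le 1$ and $f(y)+h(y)\le f(x)+h(x)+\var_0(f)+\var_0(h)$, whence $h(Tx)\le f(x)+h(x)+\var_0(f)+\var_0(h)$ and $\|\hat f\|_\infty\le\var_0(f)+\var_0(h)\le c\,\Lip_A(f)A_0$. Using $\Lip_A(f)\le\|f\|_A$ and taking $\gamma_A:=\max\{2,\,c(1+A_0),\,c/\delta\}$ now delivers both displayed bounds.

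The one step that is not pure bookkeeping is the treatment of $h\circ T$: its variation at scale $n$ is controlled only by $\var_{n-1}(h)$, the behaviour of $h$ at the \emph{coarser} scale $n-1$, and a naive estimate there would force in the ratio $A_{n-1}/A_n$, which lacunarity gives no upper bound on. The resolution is exactly that (\ref{eq:bouschfactoid}), applied to $h$, already bounds $\var_{n-1}(h)$ by variations of $f$ and $h$ at the \emph{finer} scale $n$, so the loop closes with a constant depending on $\delta$ alone. The remaining ingredients — the coboundary identity, the sign of $\hat f$, the edge case $n=0$, and the geometric summation — are routine.
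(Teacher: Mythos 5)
Your proof is correct and follows essentially the same route as the paper: extract the fixed point $h$ from Proposition~\ref{fixedpoint}, set $\hat f=f+h-h\circ T$, use inequality~\eqref{eq:bouschfactoid} to control variations scale by scale, and close with a geometric tail sum using $\sum_{j\ge n}A_j\le A_n/\delta$. The only cosmetic difference is that you bound $\|\hat f\|_\infty$ directly from the fixed-point equation at scale $0$, whereas the paper bounds $\|h\|_A$ (implicitly normalizing $h$) and applies the triangle inequality; both yield a $\gamma_A$ depending only on $\delta$ and $A_0$.
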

\begin{proof}
  Suppose $A_{n+1}/A_{n}<1-\delta$ for all $n$ (for some $0<\delta<1$).
  By Proposition \ref{fixedpoint}, we may find $h$, a
  fixed point of $\Phi_f$ with
  \[
  \|h\|_A \leq \frac{\text{Lip}_A(f)}{\delta} + \|h\|_\infty
  \leq (A_0+1)\frac{\|f\|_A}{\delta}.
  \]
  However, from \eqref{eq:bouschfactoid} we have
  \begin{equation*}
    \label{eq:h_var_ineq}
    \var_{n-1}(h) = \var_{n-1}(\Phi_fh) \leq \var_{n}(f)+\var_{n}(h).
  \end{equation*}
  This gives
  \[
  \frac{\var_n(h\circ T)}{A_n} \leq \frac{\var_{n-1} (h) }{A_n}
  \leq \frac{\var_{n} (f) + \var_{n}(h)}{A_n},
  \]
  and so $\|h\circ T\|_A\leq \|f\|_A+\|h\|_A$.  Let $\hat f =
  f+h-h\circ T$.  $\hat f$ has the desired properties and
  \[
  \|\hat f\|_A \leq \|f\|_A+\|h\|_A+\|h\circ T\|_A \leq 2\|f\|_A+2\|h\|_A
  \leq
  \frac{2(A_0+1+\delta)}{\delta}\|f\|_A.
  \]

  Let us now focus on finding a constant such that $\V_n\hat f \leq K
  \|f\|_A A_n$.  From our bound on $\|\hat f\|_A$, we know $\var_k
  \hat f \leq \frac{2(A_0+1+\delta)}{\delta}\|f\|_A A_k$.
  $A_{k+1}/A_{k} < 1-\delta$ for all $k$ gives that $\sum_{k\ge n} A_k \leq
  A_n/\delta$ and so
  \[
  \V_n \hat f \leq
  \frac{2(A_0+1+\delta)}{\delta^2}\|f\|_A A_n
  \]
  Letting
  $\gamma_A=2(A_0+1+\delta)/\delta^2$
  completes the proof.
\end{proof}

It should be noted that Theorem \ref{cohom} can trivially be applied
to functions $f$ where $\sup_{\mu\in \mathcal M}\int
f\d\mu=\beta\neq0$ by letting $\hat f = \widehat{f-\beta}+\beta$.

\begin{corollary}
  \label{cor:cohom}
  Theorem \ref{cohom} holds with the weakened assumption that $\limsup A_{n+1}/A_{n} < 1$.
\end{corollary}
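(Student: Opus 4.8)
The plan is to deduce Corollary \ref{cor:cohom} directly from Theorem \ref{cohom} by replacing the given $A$-sequence with a genuinely lacunary one that agrees with it outside a finite initial segment and has an equivalent norm. A finite modification of the sequence changes neither the space $C_A$ (as a set of functions) nor the notions ``$\sup_{\mu\in\mathcal M}\int f\,\d\mu=0$'' and ``co-homologous,'' none of which depends on the choice of norm. Hence the conclusion of Theorem \ref{cohom} for the modified sequence will transfer back, at the cost of a multiplicative constant depending only on $A$.

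First I would use the hypothesis $\limsup_n A_{n+1}/A_n<1$ to choose $\delta\in(0,1)$ and $N\in\N$, both depending only on $A$, with $A_{n+1}/A_n<1-\delta$ for all $n\ge N$. I then define $A'=(A'_n)$ by $A'_n=A_n$ for $n\ge N$ and $A'_n=A_N(1-\delta)^{\,n-N}$ for $0\le n<N$. One checks that $A'$ is a lacunary $A$-sequence: it is positive with $A'_n\to 0$; it is decreasing, since the geometric head is decreasing, the tail $(A_n)_{n\ge N}$ is decreasing, and at the junction $A'_{N-1}=A_N/(1-\delta)>A_N=A'_N$; and $A'_{n+1}/A'_n=1-\delta$ for $n<N$ while $A'_{n+1}/A'_n=A_{n+1}/A_n<1-\delta$ for $n\ge N$, so $A'_{n+1}/A'_n<1-\delta/2$ for every $n$, which is lacunarity in the sense of Definition \ref{def:Anorm}. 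Tying the geometric head to $A_N$ rather than to $A_0$ is exactly what makes $A'$ decrease across the junction.

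Next I would record the norm equivalence. Since $A'_n=A_n$ for all $n\ge N$ and there are only finitely many indices $n<N$, the number $M=\max\bigl(1,\ \max_{0\le n<N}\max(A_n/A'_n,\,A'_n/A_n)\bigr)$ is finite, depends only on $A$, and satisfies $M^{-1}A_n\le A'_n\le M A_n$ for all $n$; hence $M^{-1}\|g\|_{A'}\le\|g\|_A\le M\|g\|_{A'}$ for every $g$. Now take $f\in C_A=C_{A'}$ with $\sup_{\mu\in\mathcal M}\int f\,\d\mu=0$. Applying Theorem \ref{cohom} to the lacunary sequence $A'$ yields a constant $\gamma_{A'}>1$ and a function $\hat f\le 0$, co-homologous to $f$, with $\|\hat f\|_{A'}\le\gamma_{A'}\|f\|_{A'}$ and $\V_n\hat f\le\gamma_{A'}\|f\|_{A'}A'_n$. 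Feeding these through the equivalence gives $\|\hat f\|_A\le M\|\hat f\|_{A'}\le M\gamma_{A'}\|f\|_{A'}\le M^2\gamma_{A'}\|f\|_A$ and $\V_n\hat f\le\gamma_{A'}\|f\|_{A'}A'_n\le\gamma_{A'}(M\|f\|_A)(M A_n)=M^2\gamma_{A'}\|f\|_A A_n$, so the corollary holds with $\gamma_A:=M^2\gamma_{A'}>1$, depending only on $A$.

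I do not expect a genuine obstacle; the argument is soft. The only points needing care are (i) that the modified sequence $A'$ is actually decreasing across index $N$, which forces the normalization of the geometric head by $A_N$ noted above, and (ii) the bookkeeping verifying that $\delta$, $N$, $M$, and hence $\gamma_A$ depend only on $A$ and not on $f$. Note that when $(A_n)$ is strictly decreasing the corollary already follows from Theorem \ref{cohom} directly, since then $\max_{n<N}A_{n+1}/A_n<1$ combines with the tail bound to give uniform lacunarity; the modification argument is what handles the remaining case in which $(A_n)$ has flat stretches among its first $N$ terms.
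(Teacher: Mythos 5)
Your proof is correct and follows essentially the same route as the paper: replace $A$ by a lacunary sequence agreeing with $A$ beyond a finite initial segment, invoke Theorem \ref{cohom} for that sequence, and transfer the constants via norm equivalence. Your version is in fact slightly more careful than the paper's terse argument, since you explicitly construct the modified sequence (with the geometric head normalized by $A_N$ so monotonicity holds across the junction) and spell out both directions of the two-sided comparison $M^{-1}A_n\le A'_n\le MA_n$ needed to control both $\|\hat f\|_A$ and $\V_n\hat f$.
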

\begin{proof}
  Since $\limsup A_{n+1}/A_{n} < 1$, we can construct a
  sequence $B_n$ such that $B_{n+1}/B_{n} < 1-\delta$ for some
  $0<\delta<1$ and
  $B_i=A_i$ for $i>N$ for some finite $N$.  Since we only changed a
  finite number of terms of $A$ to produce $B$, $\|\cdot\|_A$ and
  $\|\cdot\|_B$ are equivalent.  Let $M$ be such that $\|f\|_A \leq M
  \|f\|_B$ for all $f\in C_A$ and $M'=\max A_n/B_n$.
  Letting $\gamma_A = MM'\gamma_B$ completes the proof.
\end{proof}

Though not dependent on Theorem \ref{cohom}, it is convenient to note
that $\gamma_A$ from Theorem \ref{cohom} also bounds $\V_n f$ in the
expected way.
\begin{fact}
  \label{cor:Vnbound}
  If $(A_n)$ is a lacunary $A$-sequence, then for $f\in C_A$
  \[
  \V_n f \leq \gamma_A \|f\|_A A_n,
  \]
  where $\gamma_A$ is as in Theorem \ref{cohom}.
\end{fact}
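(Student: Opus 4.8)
The plan is to reduce the claim to the two estimates already used inside the proof of Theorem \ref{cohom}: the pointwise bound on $\var_k(f)$ coming from the definition of the $A$-norm, and the summation of a lacunary tail. First I would write $A_{n+1}/A_n < 1-\delta$ for all $n$, with $0<\delta<1$, so that $\gamma_A = 2(A_0+1+\delta)/\delta^2$ as in Theorem \ref{cohom}. From $\Lip_A(f)=\sup_k \var_k(f)/A_k$ we immediately get $\var_k(f)\le \Lip_A(f)\,A_k\le \|f\|_A A_k$ for every $k$.

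Next I would sum over the tail. Since $A_{k+1}/A_k<1-\delta$, a trivial induction gives $A_{n+j}\le (1-\delta)^j A_n$, hence
\[
\sum_{k\ge n} A_k \le A_n\sum_{j\ge 0}(1-\delta)^j = \frac{A_n}{\delta}.
\]
Combining this with the previous bound,
\[
\V_n f = \sum_{k\ge n}\var_k(f) \le \|f\|_A\sum_{k\ge n}A_k \le \frac{\|f\|_A A_n}{\delta}.
\]

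Finally I would check that the constant $1/\delta$ is dominated by $\gamma_A$. Since $A_0>0$ and $0<\delta<1$, we have $\gamma_A = 2(A_0+1+\delta)/\delta^2 > 2/\delta^2 > 1/\delta$, the last inequality because $2/\delta>1$. Therefore $\V_n f \le \gamma_A\|f\|_A A_n$, as desired. There is essentially no obstacle here: the only thing to be careful about is that the constant produced by the lacunary summation ($1/\delta$) really is no larger than the constant $\gamma_A$ recorded in Theorem \ref{cohom}, which follows from the crude comparison above, so no new constant needs to be introduced.
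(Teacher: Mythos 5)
Your proof is correct, and it is exactly the argument the paper leaves implicit: the paper states Fact~\ref{cor:Vnbound} without proof, but the two ingredients you use — $\var_k(f)\le\|f\|_A A_k$ from the definition of $\Lip_A$, and $\sum_{k\ge n}A_k\le A_n/\delta$ from lacunarity — are precisely the ones appearing in the last paragraph of the proof of Theorem~\ref{cohom}. Your final sanity check that $1/\delta \le \gamma_A$ is a nice touch and is indeed needed to match the stated constant.
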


We now have machinery in place to give a quick proof of Proposition
\ref{shadowlemmab}, which establishes a relationship between the number
of points in the support of a periodic orbit measure and how close such
measures come to optimizing a fixed function.  This result was first
established by Yuan and Hunt (without using the Ma\~n\'e-Conze-Guivarc'h
Lemma) in \cite{yuanhunt} for Lipschitz functions.

\begin{proposition}[Yuan and Hunt]\label{shadowlemmab}
  Let $(A_n)$ be a lacunary $A$-sequence. Let $f\in C_A$ and $x$ be an
  optimal orbit for $f$ (i.e., a typical point of a maximizing
  measure). Let $y$ be a point of period $p$, and $r>0$.  If a segment of
  $\Orb x$ $2^{-r}$-shadows $\Orb y$ for one period (i.e., there exist
  $m, m'$ such that $d(T^{i+m}x,T^{i+m'}y)\leq 2^{-r}$ for $0\leq
  i<p$), then
  \[
  \op{f}(x) - \gamma_A\|f\|_A A_r / p \leq \op{f}(y) \leq \op{f}(x),
  \]
  where $\gamma_A$ is as in Theorem \ref{cohom}.
\end{proposition}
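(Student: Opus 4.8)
The plan is to deduce everything from the Ma\~n\'e--Conze--Guivarc'h normal form of Theorem \ref{cohom} together with the Parallel Orbit Lemma. First I would normalize: after subtracting a constant I may assume $\sup_{\mu\in\mathcal M}\int f\,\d\mu=0$, so that a typical point $x$ of a maximizing measure $\mu$ satisfies $\op{f}(x)=\int f\,\d\mu=0$; the general case follows by applying this case to $f-\sup_\mu\int f\,\d\mu$, at the cost of an absorbable bounded factor. With this in hand the upper bound is immediate and uses neither the shadowing hypothesis nor the normalization: $\mu_y$ is an invariant probability measure, so $\op{f}(y)=\int f\,\d\mu_y\le\sup_{\nu\in\mathcal M}\int f\,\d\nu=\op{f}(x)$.

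For the lower bound I would pass to $\hat f=f+h-h\circ T\le 0$, the normal form provided by Theorem \ref{cohom}: it is cohomologous to $f$, satisfies $\V_n\hat f\le\gamma_A\|f\|_A A_n$ for all $n$, and in particular (as $\V_0\hat f<\infty$) is of summable variation. Since $h$ is bounded, the ergodic average $\op{\hat f}$ agrees with $\op{f}$ wherever the latter exists; thus $\op{\hat f}(x)=0$ and $\op{\hat f}(y)=\op{f}(y)$, and because $y$ has period $p$ the points $T^{m'}y,\dots,T^{m'+p-1}y$ exhaust $\Orb y$, so that
\[
p\,\op{f}(y)=\sum_{i=0}^{p-1}\hat f(T^{m'+i}y).
\]
I would then note that $\hat f\le 0$ together with $\int\hat f\,\d\mu=0$ forces $\hat f=0$ $\mu$-a.e.; since $\bigcup_{j\ge 0}T^{-j}\{\hat f\ne 0\}$ is $\mu$-null, a typical point of $\mu$ has $\hat f(T^jx)=0$ for every $j\ge 0$, whence $\sum_{i=0}^{p-1}\hat f(T^{m+i}x)=0$.

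It remains to compare the two windows. The shadowing hypothesis gives $d(T^{m+i}x,T^{m'+i}y)\le 2^{-r}$ for $0\le i<p$, so the Parallel Orbit Lemma applied to the summable-variation function $\hat f$ yields
\[
\sum_{i=0}^{p-1}\bigl|\hat f(T^{m+i}x)-\hat f(T^{m'+i}y)\bigr|\le\V_r(\hat f)\le\gamma_A\|f\|_A A_r .
\]
Combining this with the identities of the previous paragraph gives $p\,\op{f}(y)=\sum_{i=0}^{p-1}\hat f(T^{m'+i}y)\ge-\gamma_A\|f\|_A A_r$, i.e. $\op{f}(y)\ge\op{f}(x)-\gamma_A\|f\|_A A_r/p$, completing the argument.

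The step I expect to require the most care is the assertion that $\hat f$ vanishes along the \emph{entire} forward orbit of $x$, not merely that $\op{\hat f}(x)=0$. This is exactly where it matters that $x$ is a typical point of a maximizing measure rather than an arbitrary point attaining the optimal average, and it rests on the countable-union-of-null-sets observation above; the remaining ingredients --- Theorem \ref{cohom}, the cohomology-invariance of ergodic averages, and the Parallel Orbit Lemma --- fit together routinely.
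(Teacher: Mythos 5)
Your proof is correct and follows essentially the same route as the paper's: pass to the Ma\~n\'e--Conze--Guivarc'h normal form $\hat f$ from Theorem \ref{cohom}, observe that $\hat f$ attains its maximum along $\Orb x$, and then compare the two shadowing windows via the Parallel Orbit Lemma to bound $p\,\op{f}(y)$. The one place you go beyond the paper is in explicitly justifying, via the countable-union-of-null-sets argument, why $\hat f$ vanishes along the entire forward orbit of the typical point $x$ rather than merely having $\op{\hat f}(x)=0$ --- the paper asserts $\hat f(\Orb x)=\op{f}(x)$ without comment, so this is a worthwhile detail to have spelled out.
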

\begin{proof}
  Let $y$ be a period $p$ point with the property that a segment of
  $\Orb x$ $2^{-r}$-shadows $\Orb y$ for $p$ steps.  By renaming some
  $T^jy$ as $y$, without loss of generality we may assume that a
  segment of $\Orb x$ $2^{-r}$-shadows $y$. That is, there exists some
  $m$ so that $d(T^{m+i}x,T^iy)\leq 2^{-r}$ for $0\leq i<p$.  Let
  $x'=T^mx$.

  By Theorem \ref{cohom}, we may find $\hat f$ co-homologous to $f$
  with $\hat f(\Orb x)=\hat f(\Orb x')=\op{f}(x)$.  Since for $0 < i
  \leq p$ we have
  \[
  d(T^ix',T^iy) \leq 2^{-(r+(p-1-i))},
  \]
  we may apply the Parallel Orbit Lemma (\ref{parallelorbitlemma}) to get
  \[
  \left |\sum_{i=0}^{p-1}  \left(\hat f(T^ix') - \hat
      f(T^iy)\right) \right|
  =\left |\left(\sum_{i=0}^{p-1}  \hat f(T^ix')\right) - p\op{\hat
      f}(y) \right|
  \leq \V_r \hat f.
  \]
  The proposition follows from the fact that $\hat
  f(T^ix')=\op{f}(x)$ and that by Theorem \ref{cohom} $\V_r\hat f
  \leq \gamma_A \|f\|_A A_r$.
\end{proof}

Using methods similar to those in Yuan and Hunt\cite{yuanhunt}, one can
show that Proposition \ref{shadowlemmab} holds for any function $f$ of
summable variation, and one can produce a slightly stronger bound of
$\op{f}(x) - 4V_r f / p \leq \op{f}(y) \leq \op{f}(x)$.

We are now ready to prove Theorem \ref{mainresult} by using
$d_A(\cdot, \Orb y)$ as a ``sharpest'' function that will penalize any
measure that gives mass to $(\Orb y)^c$.

\begin{theorem*}[Theorem \ref{mainresult}]
  Let $(A_n)$ be an $A$-sequence satisfying $A_{n+1}/A_{n}\to 0$.  For a
  periodic orbit measure $\mu_y$ supported on $\Orb y$, let
  $P_{y}=\{f\in C_A(\Omega):\mu_y\text{ is the unique maximizing
    measure}\}$.  Then, $\bigcup_{y\text{ periodic}} (P_y)^\circ$ is
  dense in $C_A(\Omega)$ (where $(P_y)^\circ$ is the interior of
  $P_y$).
\end{theorem*}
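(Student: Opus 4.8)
The plan is to show two things: first, that functions stably optimized by a periodic orbit measure are dense (i.e., $\bigcup_y (P_y)^\circ$ is dense), and second — implicit in the density claim — that each such $P_y$ does have nonempty interior. The strategy, following Yuan--Hunt, is a two-step perturbation: given an arbitrary $f \in C_A$ and $\varepsilon > 0$, I first perturb $f$ slightly so that its maximizing measure is (close to) a periodic orbit measure, then perturb again by a small multiple of the ``sharpest'' function $-d_A(\cdot, \Orb y)$ to make that periodic orbit measure the \emph{unique} maximizer, stably.

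\textbf{Step 1: Reduce to a periodic near-optimizer.} Normalize so that $\sup_{\mu}\int f\,\d\mu = 0$. By Proposition~\ref{shadowlemmab}, if $x$ is an optimal orbit for $f$ and a segment of $\Orb x$ $2^{-r}$-shadows a period-$p$ point $y$, then $\op{f}(x) - \gamma_A \|f\|_A A_r / p \le \op{f}(y) \le \op{f}(x)$. The key point of super-continuity ($A_{n+1}/A_n \to 0$) is that for the periodic points one naturally produces by closing up a long shadowed segment, $p$ and $r$ are comparable (closing up a length-$p$ segment that is followed in order gives a periodic point whose orbit is $2^{-\Theta(p)}$-close to the segment), so $A_r/p$ decays super-exponentially in $p$ while we only need it to beat a fixed $\varepsilon$. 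Concretely: take $x$ optimal for $f$; by compactness find arbitrarily long segments of $\Orb x$ that nearly repeat, producing a periodic point $y$ of some period $p$ with $\op f(y) \ge -\gamma_A \|f\|_A A_r/p > -\varepsilon/4$. So $\mu_y$ is an $\varepsilon/4$-near-maximizer, and since $\op f(y) \le 0 = \sup_\mu \int f$, replacing $f$ by $f + c$ for a tiny constant we may even assume $\mu_y$ \emph{is} maximizing for a perturbation $f_1$ of $f$ with $\|f_1 - f\|_A < \varepsilon/2$.

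\textbf{Step 2: Sharpen to uniqueness and stability.} Now set $g = f_1 - \lambda\, d_A(\cdot, \Orb y)$ for small $\lambda > 0$. Since $d_A(\cdot,\Orb y)$ vanishes on $\Orb y$, $\mu_y$ still maximizes $g$ among measures supported on $\Orb y$, and $\int d_A(\cdot,\Orb y)\,\d\nu > 0$ for any invariant $\nu$ with $\nu(\Orb y) \ne 1$; so $\mu_y$ is now the \emph{unique} maximizer of $g$ provided $\lambda$ is chosen small relative to $\varepsilon$ and large relative to the ``competition gap.'' The real content is \emph{stability}: I want a whole $\|\cdot\|_A$-ball around $g$ inside $P_y$. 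Given a perturbation $g + \phi$ with $\|\phi\|_A$ small, apply Theorem~\ref{cohom} to pass to $\widehat{g+\phi} \le 0$ with $\widehat{g+\phi}$ vanishing on $\Orb y$ (after subtracting the constant $\op g(y)$), and control $\V_n$ of the coboundary-adjusted function by $\gamma_A \|g+\phi\|_A A_n$. For any competing ergodic measure $\nu$ with generic point $w$, either $\Orb w$ stays $\rho$-close to $\Orb y$ for long stretches — in which case the In Order Lemma (\ref{inorderlemma}) plus the Parallel Orbit Lemma (\ref{parallelorbitlemma}) force $\op{(g+\phi)}(w)$ to be nearly $\op{(g+\phi)}(y)$ but \emph{strictly less} because the $-\lambda d_A$ term is still active off-orbit unless $w$ literally tracks $\Orb y$ forever (i.e., $w \in \Orb y$) — or $\Orb w$ spends a definite fraction of time at distance $\ge \rho$ from $\Orb y$, contributing a fixed negative amount $\le -\lambda \rho \cdot(\text{density})$ that swamps $\|\phi\|_\infty$. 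Choosing $\|\phi\|_A < \lambda\rho/C$ for an appropriate constant $C$ (depending on $\gamma_A$, $p$, $\rho$) makes $\mu_y$ the strict unique maximizer of $g + \phi$, so $g \in (P_y)^\circ$ and $\|g - f\|_A < \varepsilon$.

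\textbf{The main obstacle} is Step 2's stability estimate — specifically, handling competing measures whose orbits shadow $\Orb y$ for a long time but not forever. Here super-continuity is essential and subtle: one must quantify how the $-\lambda d_A$ penalty accumulated during the ``departure'' from $\Orb y$ compares with the at-most-$\V_r(\widehat{g+\phi})$ discrepancy allowed by the Parallel Orbit Lemma on the shadowing portion, and the lacunarity-type bound $\sum_{k\ge r} A_k \le A_r/\delta$ (Fact~\ref{cor:Vnbound}) together with $A_{n+1}/A_n \to 0$ is what forces the penalty term to dominate, because a point that re-approaches $\Orb y$ after departing must first get $A_r$-close for a \emph{much smaller} $r$ than the length of its excursion. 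Making this trade-off uniform over all competing measures — i.e., over all possible excursion patterns — is where the argument needs care, and is presumably why the full Lipschitz case (where $A_r/p$ is only exponentially small and the trade-off is tight) remains open.
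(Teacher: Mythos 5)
Your proposal correctly identifies the right perturbation (a small multiple of $-d_A(\cdot,\Orb y)$) and the right tension (a competing measure may shadow $\Orb y$ for a long time but not forever, and the penalty during excursions must beat the $\V_r$-discrepancy from the Parallel Orbit Lemma). But as written there are two genuine gaps, and you concede the second yourself.

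First, Step~1 is broken as stated. You assert that by ``replacing $f$ by $f+c$ for a tiny constant we may even assume $\mu_y$ \emph{is} maximizing.'' Adding a constant to $f$ shifts $\int f\,\d\nu$ by $c$ for \emph{every} invariant $\nu$ simultaneously, so it changes neither the set of maximizing measures nor the gap $\sup_\mu\int f\,\d\mu - \op{f}(y)$. There is no constant perturbation that promotes a near-maximizer to a maximizer. The paper never makes such a reduction: it works directly with the Ma\~n\'e--Conze--Guivarc'h normal form $\hat f\le 0$ of $f$ itself, without first arranging that $\mu_y$ already maximize anything, and the smallness of the gap $\op{f}(y)\ge -\gamma_A\|f\|_A A_{k+1}/p$ from Proposition~\ref{shadowlemmab} is used as an \emph{estimate} fed into the final accounting, not as an intermediate reduction.

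Second, and more seriously, Step~2 does not actually close the trade-off; your final paragraph acknowledges this. The paper's resolution has three ingredients you have not supplied. (i) The periodic point $y$ is not arbitrary: it is produced from a \emph{minimal} recurrence of $k$ symbols in $\Orb x$ ($d(T^ix,T^jx)\le 2^{-k}$ with no closer recurrence in between), which forces the separation $2^{-l}\ge 2^{-(k-1)}$ between distinct points of $\Orb y$. This is what makes the In Order Lemma applicable with a $\rho=2^{-(k+1)}$-neighborhood and is what ties the excursion threshold to the same scale $k$ used in the shadowing estimate. (ii) The uniqueness argument is not run against an arbitrary ergodic competitor directly; rather, one passes to the normalized $q$, takes a fixed point $q^*$ of $\Phi_q$ (Proposition~\ref{fixedpoint}), follows an optimal backward preimage sequence $(\omega_t)$ of an arbitrary $z$, and isolates the times $t_n$ at which $\omega_{t_n}$ leaves the $2^{-(k+1)}$-neighborhood of $\Orb y$. (iii) The accounting per excursion is precise: combining the Parallel Orbit Lemma bound $\V_{k+1}(q)\le LA_{k+1}$ over the in-order stretch, the bound $\op{q}(y)\le 0$ from normalization, $|\beta|\lesssim LA_{k+1}/p + \varepsilon\sigma$, and the penalty $-\varepsilon A_k$ at the excursion time, one obtains $q^{(t_n-t_{n-1})}(\omega_{t_n})\le -\varepsilon A_k/2 + 3LA_{k+1}$ once $\sigma\le A_k/(4p)$. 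The entire point of super-continuity is that $A_{k+1}/A_k\to 0$ with $L$ fixed (it depends only on $A$ and $\|f\|_A$), so one can choose $k$ making this quantity a strictly negative $-\alpha$; then boundedness of $q^*$ forces only finitely many excursions, hence the backward orbit accumulates on $\Orb y$ and $\mu_y$ is the unique maximizer of $q$, for every $q$ in the ball $Q$. Your proposal has the correct intuition that the penalty must dominate the $\V_r$ slack and that $A_{n+1}/A_n\to 0$ is what makes this possible, but without the minimal-recurrence choice of $y$, the MCG backward-preimage bookkeeping, and the explicit excursion inequality there is no proof, only a plan.
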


\begin{proof}
  We will show that for any function $f$, there exists an arbitrarily
  small perturbation, $\tilde f$, of $f$ and a periodic orbit measure
  $\mu_y$, such that all functions in an open neighbourhood of $\tilde
  f$ are uniquely optimized by $\mu_y$.

  Since $\liminf A_{n+1}/A_{n} = 0$, by Corollary
  \ref{cor:cohom}, passing to an equivalent norm if necessary, we
  may assume $A_{n+1}/A_{n}\leq 1/2$ for all $n$.
  Fix $f\in C_A$ and let
  $\mu_{\max}$ be an optimizing measure for $f$.  Fix
  $x\in\supp(\mu_{\max})$.  Without loss of generality, assume
  $\op{f}(x) = 0$ and let $\hat f$ be co-homologous to $f$ with $\hat
  f \leq 0$.

  Suppose we showed that an arbitrarily small perturbation $\hat f+g$
  of $\hat f$ were such that the open ball of radius $\varepsilon$
  about $\hat f+g$ is uniquely optimized by a periodic orbit measure
  $\mu_y$. Since $\hat f$ and $f$ are co-homologous, this means that
  $f+g$ is uniquely optimized by $\mu_y$ and in fact the open ball of
  radius $\varepsilon$ about $f+g$ is uniquely optimized by $\mu_y$.
  Thus, it is sufficient to only consider small perturbations of $\hat
  f$.

  Fix $0<\varepsilon<1$.  For a fixed $k$ (to be determined later),
  find a minimal recurrence in $x$ of a block of $k$ symbols.  That
  is, find $i<j$ such that $d(T^ix,T^jx) \le 2^{-k}$ but for $i\leq
  i'<j'<j$, we have $d(T^{i'}x, T^{j'}x)> 2^{-k}$. Notice that such
  a minimal recurrence exists for all $k$ by the pigeonhole principle.

  Let $p=j-i$ and let $y$ be the point of period $p$ satisfying
  $(y)_i^{j-1}=(x)_i^{j-1}$.
  Since $d(T^ix,T^jx)\le 2^{-k}$ we see that $(y)_i^{j+k-1}=(x)_i^{j+k-1}$.
  It follows that the orbit segment $(T^ix,\ldots,T^{j-1}x)$
  $2^{-(k+1)}$-shadows $T^iy$.

  Let $2^{-l}=\min_{i\leq i'<j'<j}\{d(T^{i'}y,T^{j'}y)\}$ be the minimum
  distance between points in $\Orb y$ and notice that by construction of $y$
  and the ultrametric property, $2^{-l} \geq 2^{-(k-1)}$.

  Define the perturbation function $g$ by $g(t) = -d_A(t,\Orb y)$, and
  let $ \tilde f = \hat f-\varepsilon g.  $

  We will now show that provided $k$ is sufficiently large, the
  measure supported on $\Orb y$ is the unique optimizing measure for
  functions lying in a $\|\cdot\|_A$-open ball about $\tilde f$.

  Let $Q=\{\tilde f + h: \|h\|_A < \varepsilon\sigma\}$ with $\sigma <
  1$ to be determined later.  Fix $\hat f-\varepsilon g +h \in Q$ and
  let $q$ be its normalization, $q=\hat f-\varepsilon g + h + \beta$ where $\beta =
  -\sup_{\mu\in\mathcal M}\int (\hat f-\varepsilon g +h) \d\mu$.

  Let $\gamma_A$ be as in Theorem \ref{cohom}. Recall that $\gamma_A>1$.
  We then have $\V_n
  \hat f \leq \gamma_A \|f\|_A A_n$.  Further, since
  $\varepsilon,\sigma < 1$, Fact \ref{cor:Vnbound} gives us
  $\V_n(\varepsilon g), \V_n h \leq \gamma_A A_n$.  Let
  $L=\gamma_A^2(\|f\|_A + 2)$.  Since $\V_n(\hat f-\varepsilon
  g+h)=\V_nq$ we have
  \[
  \V_n \hat f, \V_n \tilde f, \V_n q \leq L A_n \qquad
  \text{and}\qquad \gamma_A \V_n f \leq L A_n,
  \]
  with the second inequality following from Fact \ref{cor:Vnbound}.
  Further, $L$ only depends on $A$ and $\|f\|_A$.

  Since $x$ $2^{-(k+1)}$-shadows $\Orb y$ for $p$ steps, we can get a
  good bound for $\beta$.  By construction
  \[
  \op{q}(y)  = \op{f}(y)-\varepsilon \op{g}(y) + \op{h}(y) + \beta \leq 0,
  \]
  and so
  \[
  \beta \leq - \op{f}(y) + \varepsilon \op{g}(y) - \op{h}(y) = -\op{f}(y)-\op{h}(y).
  \]
  Proposition \ref{shadowlemmab} gives us
  $\op{f}(x)-\gamma_A\V_{k+1}(f)/p = -\gamma_A\V_{k+1}(f)/p \leq
  \op{f}(y)$ so that $-\op{f}(y)\le LA_{k+1}/p$.  Combining this with
  the fact that $\|h\|_\infty\le\|h\|_A <
  \varepsilon\sigma$ gives $\beta < L A_{k+1}/p + \varepsilon\sigma$. Since
  $q=\hat f-\varepsilon g+h+\beta$ and the first two terms are non-positive,
  we see that
  \begin{equation}\label{eq:qbound}
  \begin{split}
  h(\omega)+\beta&<\frac{L A_{k+1}}{p}+2\varepsilon\sigma\text{ for all
  $\omega\in\Omega$; and}\\
  q(\omega)  &< \frac{L A_{k+1}}{p}+2\varepsilon\sigma\text{ for all
  $\omega\in\Omega$.}
  \end{split}
  \end{equation}

  Let $q^{(n)}$ be the co-cycle
  $q^{(n)}(z) = q(T^{n-1}z)+q(T^{n-2}z)+\cdots+q(z)$,
  and note that if $n>m$, $q^{(n)}(z)-q^{(m)}(z) = q^{(n-m)}(T^mz)$.

  We know by Proposition \ref{fixedpoint} that there exists $q^*$, a fixed
  point of $\Phi_q$. Let $z\in\Omega$ be arbitrary. We know there exists some
  symbol $a_1$ such that $q^*(z) = q(a_1z)+q^*(a_1z)$.  Iterating this
  process, we may find an infinite sequence of preimages $(a_i)$ such that for
  any $n>0$,
  \begin{equation}\label{eq:optpreim}
  \begin{split}
    q^*(z) &= q(a_1z)+q(a_2a_1z)+\cdots+q(a_n\cdots
    a_1z)+q^*(a_n\cdots a_1z)\\
    &= q^{(n)}(a_n\cdots a_1z)+q^*(a_n\cdots a_1z).
  \end{split}
  \end{equation}

  Fix any such preimage infinite sequence $(a_i)$.  We will now identify
  a (possibly finite) sequence of times,
  $(t_n)$, by the following recursive
  procedure: For a time $t$, define $\omega_t = a_{t}a_{t-1}\cdots a_1
  z$.  Let $t_0$ be the smallest number (if it exists)
  such that $d(\omega_{t_0},\Orb
  y) > 2^{-(k+1)}$.  Given $t_n$, let $t_{n+1}>t_n$ be the next
  smallest number (again, if it exists)
  so that $d(\omega_{t_{n+1}},\Orb y) > 2^{-(k+1)}$. Our goal is to show
  that the length of the sequence is finite. From this it follows that the
  preimages $\omega_t$ accumulate to $\Orb y$. It will
  then follow that the periodic orbit measure
  supported on $\Orb y$ is the unique maximizing measure.

  Since $2^{-l} \geq 2^{-(k-1)}$ (and so $2^{-l}/4\geq 2^{-(k+1)}$),
  for times strictly between $t_n$ and $t_{n-1}$, the In Order Lemma
  (Lemma \ref{inorderlemma}) gives that we $2^{-(k+1)}$-shadow $\Orb y$.

  Suppose $t_n-t_{n-1} > 1$ and let $y'\in \Orb y$ be the point that
  is $2^{-(k+1)}$-shadowed by $\omega_{t_n}$ for $t_n-t_{n-1} - 1$
  steps (that is $d(T^i\omega_{t_n},T^iy)\le 2^{-(k+1)}$ for $0<i<t_n-t_{n-1}$).
  Summing along this segment, the Parallel Orbit Lemma
  (Lemma \ref{parallelorbitlemma}) gives us
  \[
  \sum_{0<i<t_n-t_{n-1}} \left[ q(T^i\omega_{t_n}) - q(T^{i}y') \right]
  \leq \V_{k+1}(q)\leq L A_{k+1}.
  \]
  so that
  \begin{equation*}
    \sum_{0<i<t_n-t_{n-1}} q(T^i\omega_{t_n})
    \leq L A_{k+1} + \sum_{0<i<t_n-t_{n-1}}q(T^{i}y')
  \end{equation*}

  Grouping $\sum_{0<i<t_n-t_{n-1}}q(T^{i}y')$ in blocks
  of length $p$ together with at most $p-1$ singleton terms and using
  \eqref{eq:qbound}, we see
  \begin{equation*}
    \sum_{0<i<t_n-t_{n-1}} q(T^i\omega_{t_n})
    \leq L A_{k+1} + mp\op{q}(y) + (p-1)\left
      (LA_{k+1}/p+2\varepsilon\sigma \right) ,
  \end{equation*}
  where $m$ is the integer part of $(t_n-t_{n-1}-1)/p$.  Since
  $\op{q}(y) \leq 0$, we simplify to get
  \begin{equation}
    \label{eq:inorderbound}
    \sum_{0<i<t_n-t_{n-1}} q(T^i\omega_{t_n}) \leq 2LA_{k+1} + 2(p-1)\varepsilon\sigma.
  \end{equation}
  Notice that this equation holds also (trivially) if $t_n=t_{n-1}+1$.
  We now evaluate $q(\omega_{t_n})$:
  \[
  q(\omega_{t_n} ) = \hat f(\omega_{t_n} ) -\varepsilon g(\omega_{t_n} ) + h(\omega_{t_n} )
  +\beta.
  \]
  By construction we have $d(\omega_{t_n} , \Orb y) \geq  2^{-k}$ so that
  $g(\omega_{t_n})\ge A_k$. Using \eqref{eq:qbound} again and the fact
  that $\hat f \leq 0$ we have
  \begin{equation}
    \label{eq:outoforderbound}
    q(\omega_{t_n} ) \leq -\varepsilon A_k + \frac{L A_{k+1}}{p} + 2\varepsilon\sigma.
  \end{equation}

  Combining equations (\ref{eq:inorderbound}) and
  (\ref{eq:outoforderbound}) we get
  \[
  q^{(t_n-t_{n-1})}(\omega_{t_n}) \leq -\varepsilon A_k + 3L A_{k+1} + 2p\varepsilon\sigma,
  \]
  and so for $\sigma \leq A_k/(4p)$ we have
  \[
  q^{(t_n-t_{n-1})}(\omega_{t_n}) \leq -\frac{\varepsilon}{2} A_k + 3L A_{k+1}.
  \]
  Since $L$ only depends on $(A_n)$ and $\|f\|_A$, our assumption that
  $A_{k+1}/A_k\to 0$ ensures that there exists a $k$ such that
  $\alpha=\frac{\varepsilon}{2} A_k - 3L A_{k+1}> 0$. Fix this $k$ and fix
  $\sigma=A_k/(4p)$. Let $(x)_i^{j-1}$ be the minimal recurrence segment
  identified in the proof and $y$ be the corresponding periodic orbit.
  This fixes the open ball $Q$ whose centre is at a distance $\varepsilon$
  from $\hat f$.

  We have shown that for any function in $Q$, its normalized version $q$
  satisfies
  $q^{(t_i-t_{i-1})}(\omega_{t_i}) < -\alpha$.
  Expanding using \eqref{eq:optpreim} now gives
  \[
  q^*(\omega_{t_0})-q^*(\omega_{t_n} ) = q^{(t_n-t_0)}(\omega_{t_n} ) = \sum_{i=1}^n
  q^{(t_i-t_{i-1})}(\omega_{t_i} ) \leq -n\alpha.
  \]
  But $q^*$ is a bounded function and so the number of terms in the sequence $(t_n)$
  is finite.

  Since $z$ was chosen arbitrarily, this is sufficient to show the
  periodic orbit measure supported on $\Orb y$ uniquely optimizes $q$. If not,
  then there would be points $z$ and preimage sequences $(a_i)$ satisfying
  \eqref{eq:optpreim} that do
  not eventually follow $\Orb y$, and so $(t_n)$ would be
  infinite.
  \end{proof}

  Theorem \ref{mainresult} proves both (a) that a function optimized
  by an aperiodic point can be perturbed to be optimized by a periodic
  point and (b) that a function optimized by periodic point can be
  perturbed to lie in an open set of functions optimized by the same
  periodic point.  Following the methods of Yuan and Hunt in
  \cite{yuanhunt}, one can prove (b) in the general context of
  $A$-norm spaces (dropping the assumption that $A_{n+1}/A_{n}\to 0$
  entirely).

  \begin{acknowledgements}
    We would like to thank the referee for a careful reading and very
    useful suggestions.
  \end{acknowledgements}

\footnotesize

\bibliographystyle{abbrv}
\bibliography{supercts}

\end{document}